\providecommand{\noopsort[1]{}}
\numberwithin{equation}{section}
\setlist{leftmargin=*}
\setlist[1]{labelindent=1.2\parindent}
\newtheorem{thm}{Theorem}[section]
\newtheorem{coro}[thm]{Corollary}
\newtheorem{prop}[thm]{Proposition}
\newtheorem{lm}[thm]{Lemma}
\theoremstyle{remark}
\newtheorem{rmk}[thm]{Remark}
\newtheorem{hyp}[thm]{Hypotheses}
\newtheorem{examp}[thm]{Example}
\newtheorem{examps}[thm]{Example}
\renewcommand{\Re}{{\rm Re}\,}
\renewcommand{\Im}{{\rm Im}\,}
\newcommand{\R}{\mathds{R}}
\newcommand{\C}{\mathds{C}}
\newcommand{\N}{\mathds{N}}
\newcommand{\tnorm}[1]{{|\kern-0.3ex|\kern-0.3ex|#1|\kern-0.3ex|\kern-0.3ex|}}
\newcommand{\la}{\langle}
\newcommand{\ra}{\rangle}
\begin{document}
\title[On a polynomial scalar perturbation in $L^p$-spaces]{On a polynomial scalar perturbation of a Schr\"odinger system in $L^p$-spaces}
\author{A. Maichine}
\address{Dipartimento di Matematica, Universit\`a degli Studi di Salerno, Via Giovanni Paolo II 132, I-84084 Fisciano (SA), Italy}
\email{amaichine@unisa.it}
\author{A. Rhandi}
\address{Dipartimento di Ingegneria dell'Informazione, Ingegneria Elettrica e Matematica Applicata, Università degli Studi di Salerno, Via Ponte Don Melillo 1, 84084 Fisciano (Sa), Italy}
\email{arhandi@unisa.it}
\date{}
\keywords{System of PDE, Schr\"odinger operator, strongly continuous semigroup, domain characterization, kernel estimates, spectrum}
\subjclass[2010]{Primary: 35K40, 47D08; Secondary: 35K08}
\maketitle
\begin{abstract}
In the paper \cite{KLMR} the $L^p$-realization $L_p$ of the matrix Schr\"odinger operator $\mathcal{L}u=div(Q\nabla u)+Vu$ was studied. The generation of a semigroup in $L^p(\R^d,\C^m)$ and characterization of the domain $D(L_p)$ has been established. In this paper we perturb the operator $L_p$ of by a scalar potential belonging to a class including all polynomials and show that still we have a strongly continuous semigroup on $L^p(\R^d,\C^m)$ with domain embedded in $W^{2,p}(\R^d,\C^m)$. We also study the analyticity, compactness, positivity and ultracontractivity of the semigroup and prove Gaussian kernel estimates. Further kernel estimates and asymptotic behaviour of eigenvalues of the matrix Schr\"odinger operator are investigated.
\end{abstract}

\section{Introduction}
While the scalar theory of second-order elliptic operators with unbounded coefficients is by now well developed (cf. \cite{lorenzi17} and the references therein), there is still few research works, at least in the framework of semigroup theory, for systems of parabolic equations with unbounded coefficients. To our knowledge one of the first papers dealing with such kind of systems is \cite{hetal09}.
Subsequently, there were some other publications \cite{aalt, alp16, dlll}. Here the strategy in these references is quite different from
that in \cite{hetal09}. Namely, in \cite{aalt, alp16, dlll} solutions to the parabolic equation are at first constructed in the space of
bounded and continuous functions. Afterwards the semigroup is extrapolated to the $L^p$-scale. This approach cannot give precise information about the domain of the generator of the semigroup.

Recently in \cite{KLMR} a noncommutative Dore--Venni theorem due to
S. Monniaux and J. Pr\"uss \cite{mp97} was used to obtain generation of $C_0$-semigroups for matrix Schr\"odinger operators of type $\mathcal{L}=div(Q\nabla\cdot)+V$ in $L^p$-spaces, where $V$ is a matrix potential whose entries can grow like $|x|^r$ for some $r\in [1,2)$. This approach permits to obtain the maximal inequality
\begin{equation}\label{MIn}
\|div(Q\nabla u)\|_{L^p(\R^d,\C^m)}+\|Vu\|_{L^p(\R^d,\C^m)}\le C\|div(Q\nabla u)+Vu\|_{L^p(\R^d,\C^m)}
\end{equation}
for all $u\in C_c^\infty(\R^d,\C^m)$ and some positive constant $C$ independent of $u$.

An other approach is to use form methods and Beurling-Denny criterion to prove generation of $C_0$-semigroup in $L^p$-spaces. This approach works for Symmetric matrix Schr\"odinger operators, but no information about the domain of the generator can be obtained, see the recent paper \cite{Maichine}.

On the other hand, as we will see in Example \ref{exa-complexe}, there is a relationship between scalar Schr\"odinger operators with complex potentials and matrix Schr\"odinger operators with real matrix potentials. So, our results can be applied to a large class of scalar Schr\"odinger operators with complex potentials.

In this paper we obtain the same generation and regularity results as in \cite{KLMR} for a more general class of potentials whose diagonal entries are polynomials of type $|x|^\alpha$ or even $e^{|x|}$ as well as $|x|^r\log(1+|x|)$, $\alpha,\,r\ge 1$. Our techniques consist in perturbing the above operator $\mathcal{L}$ by a scalar potential $v\in W^{1,\infty}_{loc}(\R^d)$ satisfying $|\nabla v|\lesssim v$, and applying a perturbation theorem due to Okazawa \cite{Okazawa84}. This approach permits us to prove the maximal inequality \eqref{MIn} for such potentials. Furthermore, we obtain sufficient conditions for analyticity of the semigroup and compactness of the resolvent.
\\
Motivated by the kernel estimates existing in literature for scalar Schr\"odinger operators, see for instance \cite{Meta-Spina,Ouha-Rha,Sikora97,MPR06,LorRha, KLR1, KLR2}, we prove Gaussian and other kernel estimates of the obtained semigroups. As a consequence we study in the symmetric case the asymptotic distribution of the eigenvalues.

The paper is organized as follow. In section 2 we state our assumptions, explain our strategy and give some preliminary results. Section 3 is devoted to the main result, that is the generation of a semigroup of the considered operator by applying Okazawa's theorem. In section 4 we study the analyticity, positivity of the semigroup and the compactness of the resolvent. In the fifth section we establish the ultracontractivity property and obtain Gaussian upper estimates for the entries of the matrix kernel. Further kernel estimates are discussed. Section 6 deals with the asymptotic distribution of the eigenvalues of symmetric matrix Schr\"odinger operators.

\textbf{Notation} Let $d,m\geq 1$. By $|.|$ we denote the Euclidean norm on $\C^j$, $j=d,m$ and $\langle \cdot,\cdot \rangle$ the Euclidean inner-product. The set $B(r)=\{x\in\R^d : |x|\leq r \}$ denotes the Euclidean ball of radius $r>0$ and center $0$. For $1\leq p<\infty$, $L^{p}(\R^d,\C^m)$ is the standard Lebesgue space endowed with the norm
\[ \|f\|_p=\left(\int_{\R^d} |f(x)|^p dx \right)^\frac{1}{p}=\left(\int_{\R^d} (\sum_{j=1}^{m}|f_j|^2)^{\frac{p}{2}} dx \right)^\frac{1}{p}, \quad f\in
 L^{p}(\R^d,\C^m).\]
If $1<p<\infty$, $p'$ denotes its conjugate : $1/p+1/p'=1$. Recall that $L^{p'}(\R^d,\C^m)$ is the dual space of $L^{p}(\R^d,\C^m)$ and the duality pairing $\langle \cdot,\cdot \rangle_{p,p'}$ is given by
\[ \langle f,g\rangle_{p,p'}=\int_{\R^d}\langle f(x),g(x)\rangle dx,\qquad (f,g)\in L^{p}(\R^d,\C^m)\times L^{p'}(\R^d,\C^m). \]
By $C_c^k(\R^d,\C^m)$, $k\in\N\cup\{\infty\}$, we denote the space of functions $f:\R^d\rightarrow\C^m$ which are differentiable up to the order k and have compact support. The space $W^{k,p}(\R^d,\C^m)$ denotes the classical Sobolev space of order $k$, that is the space of all functions  $f\in L^{p}(\R^d,\C^m)$ such that the distributional derivative $\partial^\alpha f\in L^{p}(\R^d,\C^m)$ for all $\alpha=(\alpha_1,\dots,\alpha_d)\in\N^d$ with $|\alpha|=\sum_{j=1}^{d}\alpha_j\leq k$. The set $W_{loc}^{k,p}(\R^d,\C^m)$ denotes the space of functions $f\in L^{p}(\R^d,\C^m)$ such that $\chi_{B(r)}f\in W^{k,p}(\R^d,\C^m)$ for all $r>0$, where $\chi_{B(r)}$ is the indicator function of the Euclidean ball $B(r)$.\\
A closed operator $L:D(L)\subset X\to X$ in a Banach space is said to be sectorial if there exists $\theta\in(0,\pi)$ such that $\sigma(L)\subset\Sigma_\theta$ and $$ \sup\{\|\lambda(\lambda-L)^{-1}\|:\lambda\in\C\backslash\Sigma_\theta
\}<\infty ,$$
where $\Sigma_\theta=\{\lambda\in\C: |\arg(\lambda)|<\theta\}$ and $\sigma(L)$ denotes the spectrum of $L$.

We recall that an operator $L$ on $L^p(\R^d,\C^m)$, $1<p<\infty$, is said to be accretive  if, and only if, $\Re\langle Lu, F(u)\rangle_{p,p'}\geq 0$, where
 \[F(u)=\begin{cases}
 \qquad 0,\qquad\qquad\qquad \hbox{\ if}\qquad u=0\quad a.e.\\
 \|u\|_{p}^{2-p} |u|^{p-2}\overline{u},\qquad \hbox{\ else}.
 \end{cases}.
 \]
Moreover, if $(\lambda+L)D(L)=L^{p}(\R^d,\C^m)$, for some $\lambda>0$, we say that $L$ is m-accretive. If $L$ is m-accretive and there exists $M_L\geq 0$ such that
\begin{equation}\label{sectoriality carac}
\Re\langle Lu,F(u) \rangle_{p,p'}\geq M_L |\Im\langle Lu,F(u) \rangle|
\end{equation}
for all $u\in D(L)$, then $L$ is sectorial of angle less than $\pi/2$ and thus $-L$ generates an analytic semigroup.
\section{Hypotheses, notation and preliminaries}

We still use the notation of \cite{KLMR}. In particular, $L_p$ denotes the realization in $L^p(\R^d,\C^m)$ of $\mathcal{L}$, where $\mathcal{L}u=(div(Q\nabla u_j))_{1\le j\le m}+Vu$, for $u=(u_1,\dots,u_m)$ smooth enough. Throughout the paper we assume that the matrices $Q$ and $V$ satisfy the hypotheses of \cite{KLMR}. Namely:
\begin{hyp}\label{Hyp. of KLMR}
\begin{itemize}
\item[(i)] $Q:\R^d\longrightarrow\R^{d\times d}$ is a Lipschitz function such that $Q(x)=[q_{ij}(x)],\,x\in \R^d,$ is a symmetric matrix, and there exist $\eta_1,\eta_2>0$ with
\begin{equation}\label{ell}
\eta_1 |\xi|^2\leq \langle Q(x)\xi,\xi\rangle \leq \eta_2 |\xi|^2, \qquad x,\xi\in \R^d;
\end{equation}
\item[(ii)] $V=[v_{ij}]:\R^d\longrightarrow\R^{m\times m}$ is a measurable matrix-valued function such that there exists a constant $\beta<0$ with
\begin{equation}\label{diss of V}
 \Re\langle V(x)\xi,\xi\rangle\leq \beta |\xi|^{2},\qquad x \in \R^d, \xi\in\C^m.
\end{equation}
Moreover, assume that $v_{ij}\in W^{1,\infty}_{loc}(\R^d)$ and there exists $\gamma\in[0,\frac{1}{2})$ such that
\begin{equation}\label{Cond. on V}
\sup_{x\in\R^d}|\partial_j V(x)(-V(x))^{-\gamma}|<\infty
\end{equation} for all $j\in\{1,\dots,m\}$.
\end{itemize}
\end{hyp}
\begin{rmk}\label{V-dissip}
Assuming $\beta<0$ in \eqref{diss of V} is not a restriction, since by shifting the potential one can, without loss of generality, assume that such condition is satisfied.
\end{rmk}
The condition \eqref{Cond. on V} allows Lipschitz entries for $V$ or at most, as Example 2.4 of \cite{KLMR} shows, potentials like
$$V(x)=\begin{pmatrix}
0 & 1+|x|^r\\
-(1+|x|^r) & 0
\end{pmatrix}$$ with $r\in[1,2).$
We want to establish now the same results as in \cite{KLMR} for potentials of type
$$\tilde{V}(x)=\begin{pmatrix}
-|x|^\delta & 1+|x|^r\\
-(1+|x|^r) & -|x|^\delta
\end{pmatrix}, \hbox{\ and }\begin{pmatrix}
-|x|^\delta & |x|\\
|x| & -|x|^\delta
\end{pmatrix}$$ for $\delta\ge 1$. To do so we split such a potential into $\tilde{V}=V-vI_m$, where $I_m$ the matrix identity of $\R^m$ and $V$ is a potential satisfying Hypotheses \ref{Hyp. of KLMR} and $v$ is a scalar potential satisfying $|\nabla v|\lesssim v$. Such condition allows all polynomials and potentials like $|x|^r\log(1+|x|)$. Define the operator $\tilde{\mathcal{L}}u=(div(Q\nabla u_j))_{1\le j\le m}+\tilde{V}u$ for $u$ smooth enough. Then $\tilde{\mathcal{L}}$ can be rewritten as $\tilde{\mathcal{L}}u=\mathcal{L}u-vu$, $u\in C_c^\infty(\R^d,\C^m)$. If we denote by $\tilde{L}_p$ the realization on $L^p(\R^d,\C^m)$ of $\tilde{\mathcal{L}}$ with domain $D(\tilde{L}_p)=D(L_p)\cap D(v)$, then we show that $\tilde{L}_p$ generates a contractive strongly continuous semigroup by using the following Okazawa's perturbation theorem, see \cite[Theorem 1.6]{Okazawa}.
 \begin{thm}\label{Okazawa-1}
 Let $A$ and $B$ be linear m-accretive operators on $X$ such that its dual $X^*$ is uniformly convex. Let $D$ be a core for $A$. Assume that there are nonnegative constants $c$, $a$ and $b$ such that for all $u\in D$ and $\varepsilon>0$,
 \begin{equation}\label{Oka-con}
 \Re\langle Au,F(B_\varepsilon u)\rangle \geq -c\|u\|^2-a\|B_\varepsilon u\|\|u\|-b\|B_\varepsilon u\|^2 ,
 \end{equation}
 where $B_\varepsilon :=B(I+\varepsilon B)^{-1}$ denotes the Yosida approximation of $B$.
 If $t>b$ then $A+tB$ with domain $D(A)\cap D(B)$ is m-accretive and $D(A)\cap D(B)$ is a core for $A$. Furthermore, $A+bB$ is essentially m-accretive on $D(A)\cap D(B)$.
 \end{thm}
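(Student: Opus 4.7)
The plan is to regularize $B$ by its Yosida approximations $B_\varepsilon := B(I+\varepsilon B)^{-1}$, which are everywhere-defined, bounded and m-accretive. Since $B_\varepsilon$ is bounded, $A + tB_\varepsilon$ with domain $D(A)$ is a bounded accretive perturbation of an m-accretive operator, hence itself m-accretive. Thus for every $f \in X$ and $\lambda > 0$ there is a unique $u_\varepsilon \in D(A)$ solving
\begin{equation*}
(\lambda + A + tB_\varepsilon) u_\varepsilon = f,
\end{equation*}
and accretivity of $A + tB_\varepsilon$ already yields $\lambda\|u_\varepsilon\| \leq \|f\|$. The goal is to extract a limit $u \in D(A) \cap D(B)$ with $(\lambda + A + tB)u = f$, which gives the range condition and thus m-accretivity.

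The decisive step is the a priori estimate on $\|B_\varepsilon u_\varepsilon\|$. Pairing the regularized equation with $F(B_\varepsilon u_\varepsilon)$, using $\langle B_\varepsilon u_\varepsilon, F(B_\varepsilon u_\varepsilon)\rangle = \|B_\varepsilon u_\varepsilon\|^2$, and invoking \eqref{Oka-con} together with $|\langle v, F(w)\rangle| \leq \|v\|\|w\|$, I would obtain
\begin{equation*}
(t-b)\|B_\varepsilon u_\varepsilon\|^2 \leq \bigl((a+\lambda)\|u_\varepsilon\| + \|f\|\bigr)\|B_\varepsilon u_\varepsilon\| + c\|u_\varepsilon\|^2.
\end{equation*}
Since $t>b$ and $\|u_\varepsilon\|$ is already controlled, this bounds $\|B_\varepsilon u_\varepsilon\|$ uniformly in $\varepsilon$. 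A subtlety is that \eqref{Oka-con} is assumed only on $D$: to apply it at $u_\varepsilon \in D(A)$ one first approximates $u_\varepsilon$ in the graph norm of $A$ by elements of $D$ and passes to the limit, which is legitimate because uniform convexity of $X^*$ makes $F$ single-valued and norm-to-norm continuous on bounded sets.

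Having uniform control of $\|u_\varepsilon\|$ and $\|B_\varepsilon u_\varepsilon\|$, reflexivity of $X$ (inherited from uniform convexity of $X^*$) produces a subsequence with weak limit $u$. Writing $J_\varepsilon := (I+\varepsilon B)^{-1}$, the identity $u_\varepsilon - J_\varepsilon u_\varepsilon = \varepsilon B_\varepsilon u_\varepsilon \to 0$ in norm gives $J_\varepsilon u_\varepsilon \rightharpoonup u$; since $B J_\varepsilon u_\varepsilon = B_\varepsilon u_\varepsilon$ stays bounded, closedness of $B$ forces $u \in D(B)$ with $B_\varepsilon u_\varepsilon \rightharpoonup Bu$. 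The remaining term $Au_\varepsilon = f - \lambda u_\varepsilon - tB_\varepsilon u_\varepsilon$ is bounded, so closedness of $A$ puts $u \in D(A)$ with $(\lambda + A + tB)u = f$, as desired.

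For the core claim, applying the resolvent $(\lambda + A + tB)^{-1}$ to the dense set $(\lambda + A)D$ produces elements of $D(A)\cap D(B)$; combining this with the graph-closedness of $A$ and the a priori estimates shows $D(A)\cap D(B)$ is dense in $D(A)$ for the graph norm. The borderline case $t = b$ then follows by invoking the result for $t_n = b + 1/n$, and passing to the limit using the uniform resolvent estimates to see that the closure of $(A+bB)|_{D(A)\cap D(B)}$ is m-accretive. The principal obstacle is the double-pairing argument: the bound on $\|B_\varepsilon u_\varepsilon\|^2$ is only available because the favourable $t\|B_\varepsilon u_\varepsilon\|^2$ coming from accretivity outweighs the unfavourable $b\|B_\varepsilon u_\varepsilon\|^2$ in \eqref{Oka-con}, so the strict inequality $t > b$ is the sharp threshold and explains why the equality case has to be reached by a separate closure argument.
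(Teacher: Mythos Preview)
The paper does not prove this theorem: it is quoted verbatim as Okazawa's perturbation theorem with a reference to \cite[Theorem 1.6]{Okazawa}, and is then used as a black box in Section~3. So there is no ``paper's own proof'' to compare against.

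That said, your outline is essentially the argument Okazawa gives. The backbone --- regularize by the Yosida approximation $B_\varepsilon$, solve $(\lambda+A+tB_\varepsilon)u_\varepsilon=f$, pair with $F(B_\varepsilon u_\varepsilon)$ to obtain the uniform bound $(t-b)\|B_\varepsilon u_\varepsilon\|^2 \le \bigl((a+\lambda)\|u_\varepsilon\|+\|f\|\bigr)\|B_\varepsilon u_\varepsilon\|+c\|u_\varepsilon\|^2$, then extract weak limits using reflexivity and close up via the graph-closedness of $A$ and $B$ --- is exactly the strategy, and you correctly flag the one genuine subtlety, namely that \eqref{Oka-con} is assumed only on the core $D$ and must be transported to $D(A)$ by approximation, which requires the continuity of the duality map $F$ coming from uniform convexity of $X^*$. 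Your treatment of the borderline $t=b$ by a closure/limiting argument is also the standard route. Nothing is missing at the level of a sketch; the only things a full write-up would add are the routine verifications that the weak limits are indeed strong enough (here one typically upgrades weak to strong convergence of $u_\varepsilon$ using monotonicity/accretivity before invoking closedness).
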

 Let us introduce some notations which will be used from now on. Since, for each $x\in\R^d$, $Q(x)$ is a symmetric positive nondegenerate matrix, we introduce the $Q(x)$-norm on $\R^d$ and its associated inner-product, given by $$<y,z>_{Q(x)}:=\langle Q(x)y,z\rangle$$ and the corresponding norm is
$$|y|_{Q(x)}=\langle Q(x)y,y\rangle^{\frac{1}{2}}.$$

Before starting applying Okazawa's theorem we need to show that $C_c^\infty(\R^d,\C^m)$ is a core for $L_p$, for all $1<p<\infty$.\\
 We first state a lemma which gives a generalisation of the Stampacchia theorem concerning the weak derivative of the absolute value function, see \cite[Lemma 7.6]{Gilbarg}.
\begin{lm}\label{1st lemma of sec 3}
 Let $1<p<\infty$ and $u=(u_1(\cdot),\dots,u_m(\cdot))\in W^{1,p}(\R^d,\C^m)$. Then,  $|u|\in W^{1,p}(\R^d)$ and
 \begin{equation}\label{gradient of |.|=}
 \nabla|u|=\frac{1}{|u|}\sum_{j=1}^{m}\Re(\bar{u}_j\nabla u_j)\chi_{\{u\ne 0\}}.
 \end{equation}
 Moreover,
 \begin{equation}\label{gradient |.| <...}
 |\nabla |u||_Q^2 \leq\displaystyle\sum_{j=1}^{d}|\nabla u_j|_Q^2.
 \end{equation}
 \end{lm}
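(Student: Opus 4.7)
The plan is to approximate $|u|$ by the smooth regularization $|u|_\eps:=(|u|^2+\eps^2)^{1/2}$ and pass to the limit $\eps\to 0^+$ in the integration-by-parts identity against test functions. Writing $|u|^2=\sum_{j=1}^m u_j\bar u_j$ and differentiating one obtains
\[
\partial_k|u|_\eps=\frac{1}{|u|_\eps}\sum_{j=1}^{m}\Re(\bar u_j\,\partial_k u_j),
\]
and the Cauchy--Schwarz inequality in $\C^m$ yields the $\eps$-independent pointwise bound
\[
|\nabla|u|_\eps|\le\frac{|u|}{|u|_\eps}\Bigl(\sum_{j=1}^{m}|\nabla u_j|^2\Bigr)^{1/2}\le\Bigl(\sum_{j=1}^{m}|\nabla u_j|^2\Bigr)^{1/2}\in L^p(\R^d).
\]

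For any $\phi\in C_c^\infty(\R^d)$ and coordinate index $k$, I would start from $\int|u|_\eps\,\partial_k\phi=-\int(\partial_k|u|_\eps)\phi$. The left-hand side tends to $\int|u|\,\partial_k\phi$ by dominated convergence since $||u|_\eps-|u||\le\eps$ and $\phi$ has compact support. For the right-hand side, on $\{u\ne 0\}$ one has $|u|_\eps\to|u|>0$ and the integrand converges pointwise to the candidate $g_k:=\frac{1}{|u|}\sum_j\Re(\bar u_j\,\partial_k u_j)$; on $\{u=0\}$ every $u_j$ vanishes, so the numerator in the explicit formula for $\partial_k|u|_\eps$ vanishes a.e.\ there, giving $\partial_k|u|_\eps=0$ a.e.\ on $\{u=0\}$ for every $\eps>0$, which matches the value $0$ assigned to $g_k$ via the indicator $\chi_{\{u\ne 0\}}$. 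Combined with the uniform bound above, dominated convergence identifies $g_k$ as the weak partial derivative of $|u|$, which proves \eqref{gradient of |.|=} and yields $|u|\in W^{1,p}(\R^d)$.

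To derive \eqref{gradient |.| <...} I would take the $Q$-norm of the formula just established. Extending $|\cdot|_Q$ to $\C^d$ by $|z|_Q^2:=|\Re z|_Q^2+|\Im z|_Q^2$ gives $|\Re z|_Q\le|z|_Q$, and one has the scalar identity $|\bar u_j\nabla u_j|_Q=|u_j||\nabla u_j|_Q$. Combining these with the triangle inequality and Cauchy--Schwarz in $\C^m$,
\[
|\nabla|u||_Q\le\frac{1}{|u|}\sum_{j=1}^{m}|u_j|\,|\nabla u_j|_Q\le\Bigl(\sum_{j=1}^{m}|\nabla u_j|_Q^2\Bigr)^{1/2}
\]
a.e.\ on $\{u\ne 0\}$, while both sides vanish a.e.\ on $\{u=0\}$; squaring yields the claim.

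The only subtle point is the apparent singularity $1/|u|$ in \eqref{gradient of |.|=}. What makes the limit argument go through transparently is that the regularized derivative $\partial_k|u|_\eps$ vanishes identically (not merely in some limiting sense) on $\{u=0\}$, because the numerator $\sum_j\Re(\bar u_j\,\partial_k u_j)$ is a pointwise product in which every factor $\bar u_j$ already vanishes there. No delicate Stampacchia-type argument is needed inside the approximation; the regularization itself absorbs the singularity.
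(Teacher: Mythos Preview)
Your proof is correct and follows essentially the same approach as the paper: a smooth regularization $(|u|^2+\eps^2)^{1/2}$ of $|u|$ combined with dominated convergence for part~(i), and Cauchy--Schwarz for part~(ii). The only cosmetic differences are that the paper subtracts $\eps$ from the regularization (so that $a_\eps(u)\le|u|$ and one can pass to the limit directly in $L^p$ rather than through the weak formulation against test functions), and for the $Q$-norm inequality the paper routes through $|u|\nabla|u|=\tfrac12\sum_j\nabla|u_j|^2$ instead of your complexified $|\cdot|_Q$; both routes yield the same Cauchy--Schwarz estimate.
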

 \begin{proof}
 (i) Let $\varepsilon>0$ and define $a_\varepsilon(u)=(\displaystyle\sum_{j=1}^{m}u_j^2+\varepsilon^2)^{\frac{1}{2}}-\varepsilon$. Then, $a_\varepsilon(u)\in W^{1,p}(\R^d)$ and \[\nabla a_\varepsilon(u)=\dfrac{\displaystyle\sum_{j=1}^{m}\Re(\bar{u}_i\nabla u_j)}{(\displaystyle\sum_{j=1}^{m}u_j^2+\varepsilon^2)^{\frac{1}{2}}}.\]
 We have the following pointwise convergence: $a_\varepsilon(u)\underset{\varepsilon\to 0}{\longrightarrow} |u|$ and
 $$\nabla a_\varepsilon(u)\underset{\varepsilon\to 0}{\longrightarrow} \frac{1}{|u|}\displaystyle\sum_{j=1}^{m}\Re(\bar{u}_j\nabla u_j) \chi_{\{u\neq 0\}}.$$
  Now, since $a_\varepsilon(u)=\dfrac{|u|^2}{(\displaystyle\sum_{j=1}^{m}u_j^2+\varepsilon^2)^{\frac{1}{2}}+\varepsilon}\le |u|$ and by Young inequality $|\nabla a_\varepsilon(u)|\le |\nabla u|$, thus the dominated convergence theorem yields $|u|\in W^{1,p}(\R^d)$ and \eqref{gradient of |.|=}.\\
  (ii) One knows that
  \[ \Re(\bar{u}_j\nabla u_j)=\frac{1}{2}\nabla |u_j|^2\,\hbox{ and, by }\eqref{gradient of |.|=},\,|u|\nabla|u|=\frac{1}{2}\nabla|u|^2=\frac{1}{2}\sum_{j=1}^{m}\nabla|u_j|^2 .\]
  Hence, by the Cauchy-Schwartz inequality, it follows that
  \begin{align*}
  |u||\nabla|u||_Q &= \frac{1}{2}\left|\sum_{j=1}^{m}\nabla|u_j|^2\right|_Q \\
  &\leq \frac{1}{2}\sum_{j=1}^{m}|\nabla|u_j|^2|_Q \\
  &\leq \sum_{j=1}^{m}|\Re (u_j \nabla u_j)|_Q \\
  &\leq \sum_{j=1}^{m}| u_j \nabla u_j|_Q \\
  &\leq |u|\left(\sum_{j=1}^{m}|\nabla u_j|_Q^2\right)^{\frac{1}{2}}.
  \end{align*}
  Thus, $|\nabla |u||_Q^2 \leq\displaystyle\sum_{j=1}^{d}|\nabla u_j|_Q^2$.
  \end{proof}
\begin{rmk}\label{adjoint-lp}
We note that the adjoint $V^\ast$ also satisfies
Hypotheses \ref{Hyp. of KLMR}, except for the
fact that instead of the boundedness of
$\partial_j V^\ast (-V^\ast)^{-\gamma}$, we have the boundedness
of $(-V^\ast)^{-\gamma} \partial_jV^\ast$.
However, an inspection of the proofs in \cite[Theorem 3.2 and Corollary 3.3]{KLMR} shows that they
remain valid also under this assumption, whence we obtain the same results for $L_p^\ast$, the adjoint of $L_p$ with $p\in (1,+\infty)$.
\end{rmk}
Using the above remark we prove now that $C_c^\infty(\R^d,\C^m)$ is a core for $L_p$.
\begin{prop}\label{Coincidence of domains}
Let us assume Hypotheses \ref{Hyp. of KLMR}. Then for any $1<p<\infty$
$$D(L_p)=\{u\in L^p(\R^d,\C^m)\cap W^{2,p}_{loc}(\R^d,\C^m) : \mathcal{L}u\in L^p(\R^d,\C^m)\}=: D_{p,max}(\mathcal{L}),$$
and $C_c^\infty(\R^d,\C^m)$ is a core for $L_p$.
\end{prop}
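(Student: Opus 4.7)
The plan is first to prove that $C_c^\infty(\R^d,\C^m)$ is a core for $L_p$ (and, by Remark~\ref{adjoint-lp}, for $L_{p'}^\ast$) via a standard cutoff-and-mollify approximation using the explicit description $D(L_p)\subseteq W^{2,p}(\R^d,\C^m)\cap\{u:Vu\in L^p(\R^d,\C^m)\}$ from \cite{KLMR}, and then to deduce the identity $D(L_p)=D_{p,\max}(\mathcal{L})$ by a duality argument.

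For the core statement, fix $u\in D(L_p)$ and choose $\eta_R\in C_c^\infty(\R^d)$ with $\eta_R\equiv 1$ on $B(R)$, $\eta_R\equiv 0$ outside $B(2R)$, and $|\nabla\eta_R|+R|D^2\eta_R|\le C/R$. From the identity
\[
\mathcal{L}(\eta_R u)=\eta_R\mathcal{L}u+2\langle Q\nabla\eta_R,\nabla u\rangle+\operatorname{div}(Q\nabla\eta_R)\,u,
\]
the two commutator terms are supported in $\{R\le|x|\le 2R\}$ with $L^\infty$-norm $O(1/R)$ by the ellipticity and Lipschitz regularity of $Q$, and hence vanish in $L^p$ as $R\to\infty$ (using $u,\nabla u\in L^p(\R^d)$ because $D(L_p)\subseteq W^{2,p}$); combined with $\eta_R\mathcal{L}u\to\mathcal{L}u$ in $L^p$ by dominated convergence, this yields $\eta_R u\to u$ in the graph norm of $L_p$. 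Since $\eta_R u$ has compact support and lies in $W^{2,p}$, the mollification $\rho_{1/k}\ast(\eta_R u)\in C_c^\infty(\R^d,\C^m)$ converges to $\eta_R u$ in $W^{2,p}$; because the supports lie in a fixed compact ball on which $V$ is bounded, the convergence holds in the graph norm as well. A diagonal extraction completes the argument, and the same reasoning applied in $L^{p'}$ (justified by Remark~\ref{adjoint-lp}) gives that $C_c^\infty(\R^d,\C^m)$ is a core for $L_{p'}^\ast$.

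The inclusion $D(L_p)\subseteq D_{p,\max}(\mathcal{L})$ is immediate. For the converse, fix $\lambda>0$ with $\lambda\in\rho(L_p)\cap\rho(L_{p'}^\ast)$ and take $u\in D_{p,\max}(\mathcal{L})$. Setting $f:=\lambda u-\mathcal{L}u\in L^p(\R^d,\C^m)$, let $v\in D(L_p)$ solve $\lambda v-L_p v=f$. Then $w:=u-v\in L^p(\R^d,\C^m)\cap W^{2,p}_{loc}(\R^d,\C^m)$ satisfies $\lambda w-\mathcal{L}w=0$ pointwise a.e. For any $\phi\in C_c^\infty(\R^d,\C^m)$, integration by parts (legitimate since $\phi$ is compactly supported and $w\in W^{2,p}_{loc}$, so no boundary terms appear) yields
\[
0=\int_{\R^d}\langle w,(\lambda-\mathcal{L}^\ast)\phi\rangle\,dx\qquad\text{for every }\phi\in C_c^\infty(\R^d,\C^m),
\]
with $\mathcal{L}^\ast\phi=\operatorname{div}(Q\nabla\phi)+V^\ast\phi$. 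Since $C_c^\infty(\R^d,\C^m)$ is a core for $L_{p'}^\ast$ and $\lambda-L_{p'}^\ast$ is surjective, $(\lambda-\mathcal{L}^\ast)\bigl(C_c^\infty(\R^d,\C^m)\bigr)$ is dense in $L^{p'}(\R^d,\C^m)$, which forces $w=0$; hence $u=v\in D(L_p)$.

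The main technical obstacle is controlling the commutator $\langle Q\nabla\eta_R,\nabla u\rangle$ in the cutoff step: this is precisely where the global integrability $\nabla u\in L^p(\R^d)$—available thanks to the KLMR inclusion $D(L_p)\subseteq W^{2,p}$—is indispensable. Without this input, attempting to approximate elements of $D_{p,\max}(\mathcal{L})$ directly would require a separate global gradient estimate, which is avoided here by doing the approximation only inside $D(L_p)$ and transferring the information to $D_{p,\max}(\mathcal{L})$ via duality with the adjoint.
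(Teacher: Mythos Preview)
Your proof is correct, but the logical order is reversed compared with the paper's. The paper first establishes $D(L_p)=D_{p,\max}(\mathcal{L})$ directly by an injectivity argument: given $u\in D_{p,\max}(\mathcal{L})$ with $(\lambda-\mathcal{L})u=0$, it pairs against $\zeta_n^2|u|^{p-2}u$ (regularized to $(|u|^2+\varepsilon)^{(p-2)/2}u$ when $1<p<2$), integrates by parts, uses the dissipativity \eqref{diss of V} of $V$ together with Lemma~\ref{1st lemma of sec 3}, and lets $n\to\infty$ to force $u=0$. Only afterwards does it prove the core property, via duality and interior elliptic regularity, \emph{using} the domain identity just established for the adjoint. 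You instead prove the core property first by a direct cutoff-and-mollify approximation inside $D(L_p)$---exploiting the inclusion $D(L_p)\subseteq W^{2,p}(\R^d,\C^m)$ from \cite{KLMR} to control the commutator $\langle Q\nabla\eta_R,\nabla u\rangle$---and then deduce the domain identity from the core property of the adjoint. Your route sidesteps the energy computation (and in particular the $p<2$ regularization), at the cost of leaning more heavily on the full $W^{2,p}$-regularity furnished by \cite{KLMR}; the paper's route uses the dissipativity of $V$ more explicitly in the injectivity step but recovers the core statement only indirectly. (A minor notational point: what you denote $L_{p'}^\ast$ should be $L_p^\ast$, the adjoint of $L_p$ acting on $L^{p'}(\R^d,\C^m)$.)
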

\begin{proof}
From \cite[Corollary 3.3]{KLMR} we know that
$$D(L_p)=\{u\in W^{2,p}(\R^d,\C^m): Vu\in L^p(\R^d,\C^m)\}.$$
  So, it is obvious that $D(L_p)\subseteq D_{max}(\mathcal{L})$. In order to prove the other inclusion it suffices to show that $\lambda-\mathcal{L}$ is injective on $D_{max}(\mathcal{L})$, for some $\lambda>0$. To this purpose let $u\in D_{max}(\mathcal{L})$ such that $(\lambda-\mathcal{L})u=0$.
  Since the coefficients of the operator $\mathcal{L}$ are real, one can assume that $u$ is real valued function. Consider $\zeta\in C_c^\infty(\R^d)$ satisfying $\chi_{B(1)}\leq\zeta\leq\chi_{B(2)}$ and define $\zeta_n(\cdot)=\zeta(\cdot/n)$ for all $n\in\N$. Assume first that $p\geq 2$. Multiplying $(\lambda-\mathcal{L})u$ by $\zeta_n^2 |u|^{p-2}u$ and integrating over $\R^d$ one obtains
 \begin{eqnarray*}
 0&=&\lambda\int_{\R^d}\zeta_n^2(x)|u(x)|^p dx+\int_{\R^d}\sum_{j=1}^{m}\langle Q\nabla u_j,\nabla(|u|^{p-2}u_j\zeta_n^2)\rangle dx\\
 &-&\int_{\R^d}\langle V(x)u(x),u(x)\rangle |u(x)|^{p-2}\zeta_n^2(x)dx\\
 &\geq & \lambda\int_{\R^d}\zeta_n^2(x)|u(x)|^p dx+\int_{\R^d}|u(x)|^{p-2}\zeta_n^2(x)\sum_{j=1}^{m}\langle Q(x)\nabla u_j(x),\nabla u_j(x)\rangle dx\\
 &+& 2\int_{\R^d}\sum_{j=1}^{m}|u(x)|^{p-2}u_j(x)\zeta_n(x)\langle Q(x)\nabla u_j(x),\nabla \zeta_n(x)\rangle dx\\
 &+&(p-2)\int_{\R^d}|u(x)|^{p-2}\zeta_n^2(x)\langle Q(x)\nabla |u|(x),\nabla |u|(x)\rangle dx\\
 &\geq &\lambda\int_{\R^d}\zeta_n^2(x)|u(x)|^p dx+\int_{\R^d}|u(x)|^{p-2}\zeta_n^2(x)\sum_{j=1}^{m}\langle Q(x)\nabla u_j(x),\nabla u_j(x)\rangle dx\\
  &+& 2\int_{\R^d}|u(x)|^{p-2}\zeta_n(x)\sum_{j=1}^{m}u_j(x)\langle Q(x)\nabla u_j(x),\nabla \zeta_n(x)\rangle dx\\
 &\geq & \lambda\int_{\R^d}\zeta_n^2(x)|u(x)|^p dx-\int_{\R^d}|u(x)|^{p-2}\sum_{j=1}^{m}\langle Q(x)\nabla \zeta_n (x),\nabla \zeta_n (x)\rangle u_j^2(x)\\
 &\geq & \lambda\int_{\R^d}\zeta_n^2(x)|u(x)|^p dx-\frac{\eta_2\|\nabla\zeta\|_\infty^2}{n^2}\int_{\R^d}|u(x)|^p dx.
 \end{eqnarray*}
  Letting $n$ goes to $\infty$ one obtains $0\geq \lambda\|u\|^p$, and hence $u=0$. For the case $p<2$, one can multiply by $\zeta_n(|u|^2+\varepsilon)^{\frac{p-2}{2}}u$, $\varepsilon>0$, instead of $\zeta_n|u|^{p-2}u$ and repeat the same calculus to obtain the result by tending $\varepsilon$ to $0$.\\
 In order to prove that $C_c^\infty(\R^d,\C^m)$ is a core for $L_p$ it suffices to show that $(\lambda-L_p)C_c^\infty(\R^d,\C^m)$ is dense in $L^p(\R^d,\C^m)$ for some $\lambda>0$. For this purpose let $f\in L^{p'}(\R^d,\C^m)$ such that $\langle (\lambda-L_p)\varphi, f\rangle=0$ for all $\varphi\in C_c^\infty(\R^d,\C^m)$. Hence,
 \[ \int_{\R^d}\langle\varphi(x),(\lambda+V^*(x)f(x)\rangle dx=\int_{\R^d}\langle div(Q\nabla\varphi)(x),f(x)\rangle dx,\qquad\forall\varphi\in C_c^\infty(\R^d,\C^m). \]
 Standard elliptic regularity yields that $f\in W^{2,p'}_{loc}(\R^d,\C^m)$ and
 \[ \int_{\R^d}\langle\varphi(x),(\lambda+V^*(x)f(x)\rangle dx=\int_{\R^d}\langle \varphi(x),div(Q\nabla f)(x)\rangle dx,\qquad\forall\varphi\in C_c^\infty(\R^d,\C^m). \]
 Then, $(\lambda+V^*)f=div(Q\nabla f)$ a.e. Hence, by Remark \ref{adjoint-lp} and the above characterization of the domain, it follows that $f\in D(L_p^*),$  $(\lambda-L_p^*)f=0$ and thus $f=0$. This ends the proof.
\end{proof}

\section{Generation of semigroup}
In this section we assume that Hypotheses \ref{Hyp. of KLMR} are satisfied.

Consider $0\le v\in W^{1,\infty}_{loc}(\R^d)$ and define on $L^{p}(\R^d,\C^m)$ the multiplication operator with its maximal domain
\begin{equation}\label{scalar-mult-oper}
B_p u=vu,\,\,u\in D(B_p)=\{u\in L^{p}(\R^d,\C^m) : vu\in L^{p}(\R^d,\C^m) \}.
\end{equation}
It is easy to see that $B_p$ is m-accretive on $L^{p}(\R^d,\C^m)$. Define, for $\varepsilon>0$ and $u\in L^{p}(\R^d,\C^m)$, the function $v_\varepsilon :=v(1+\varepsilon v)^{-1}$ and $B_{p,\varepsilon}u :=v_\varepsilon u$. The operator $B_{p,\varepsilon}$ is the Hille-Yosida approximation of $B_p$.\\

Fix $1<p<\infty$, $\varepsilon>0$ and $u\in C_c^\infty(\R^d,\C^m)$, and set $R :=v_\varepsilon ^{p-1}$.\\
We propose now to prove \eqref{Oka-con} for the operators $-L_p$ and $B_p$. We start by approximating the left hand side of \eqref{Oka-con} as follow
\begin{equation}\label{approximation of dual product}
  \Re\langle-L_p u,|B_{p,\varepsilon}u|^{p-2}B_{p,\varepsilon}u\rangle_{p,p'}=\lim_{\delta\rightarrow 0}P_\delta,
 \end{equation}
 where \[ P_\delta:= \Re\langle-L_p u,Ru_{\delta}^{p-2}u\rangle_{p,p'} ,\]
 and
 \[ u_\delta=\begin{cases}
   (|u|^2+\delta)^{\frac{1}{2}},\qquad \hbox{\ if}\quad 1<p<2,\\
   \quad |u|, \qquad\qquad \hbox{\ if}\quad p\geq 2.
   \end{cases} .\]
   Noting that  $u_\delta ^{p-2}u\longrightarrow |u|^{p-2}u$, as $\delta\rightarrow 0$, in $L^{p'}(\R^d,\C^m)$. So, the convergence in \eqref{approximation of dual product} follows easily.\\
The following lemma yields a lower estimate for $P_\delta$.
\begin{lm}\label{enonce-lm}
Let $\delta>0$. One has, for $p\geq 2$,
\begin{eqnarray}\label{lm p>2}
P_\delta &\geq & (p-1)\int_{\R^d}|\nabla|u(x)||_{Q(x)}^2 u_\delta ^{p-2}(x)R(x)dx \nonumber \\
& +& \frac{1}{2}\int_{\R^d}\langle \nabla|u|^2 (x),\nabla R(x) \rangle_{Q(x)} u_\delta ^{p-2}(x)dx,
\end{eqnarray}
and for $1<p<2$
\begin{eqnarray}\label{lm p<2}
P_\delta &\geq & (p-1)\int_{\R^d}\sum_{j=1}^{m}|\nabla u_j (x)|_{Q(x)}^2 u_\delta ^{p-2}(x)R(x)dx\nonumber \\
& +& \frac{1}{2}\int_{\R^d}\langle \nabla|u|^2 (x),\nabla R(x) \rangle_{Q(x)} u_\delta ^{p-2}(x)dx.
\end{eqnarray}
\end{lm}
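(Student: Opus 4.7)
The plan is to split $-L_p u = -div(Q\nabla u) - Vu$ and dispose of the two pieces independently. The $V$-contribution is
$$-\Re\int_{\R^d}\langle Vu,u\rangle Ru_\delta^{p-2}\,dx\geq 0$$
by the dissipativity assumption \eqref{diss of V} (with $\beta<0$) together with the pointwise bound $Ru_\delta^{p-2}\geq 0$, so it may simply be discarded. The whole lemma thereby reduces to proving the asserted lower bounds for the $-div(Q\nabla u)$-piece alone.

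For this remaining piece I would integrate by parts componentwise and apply Leibniz. Two scalar identities clean up the algebra: $\langle Q\nabla u_j,\nabla\bar u_j\rangle = |\nabla u_j|_Q^2$ (real, by symmetry of $Q$) and $\Re(\bar u_j\,\partial_k u_j) = \tfrac12\partial_k|u_j|^2$. Summing in $j$ yields the exact equality
$$-\Re\int_{\R^d}\langle div(Q\nabla u),Ru_\delta^{p-2}u\rangle\,dx = \int_{\R^d} Ru_\delta^{p-2}\sum_{j=1}^{m}|\nabla u_j|_Q^2\,dx + \tfrac12\int_{\R^d}\langle\nabla|u|^2,\nabla(Ru_\delta^{p-2})\rangle_Q\,dx,$$
and the further expansion $\nabla(Ru_\delta^{p-2}) = u_\delta^{p-2}\nabla R + (p-2)Ru_\delta^{p-3}\nabla u_\delta$ immediately exhibits the advertised $\nabla R$-term of the lemma plus a ``cross term'' proportional to $(p-2)$ and involving $\nabla u_\delta$.

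It remains to dispose of this cross term in each regime. For $p\geq 2$, $u_\delta=|u|$ and $\nabla|u|^2=2|u|\nabla|u|$, so the cross term simplifies to $(p-2)\int Ru_\delta^{p-2}|\nabla|u||_Q^2\,dx$; adding it to the main term and applying the inequality \eqref{gradient |.| <...} of Lemma \ref{1st lemma of sec 3} to lower $\sum_j|\nabla u_j|_Q^2$ down to $|\nabla|u||_Q^2$ produces the coefficient $1+(p-2)=p-1$, which is \eqref{lm p>2}. For $1<p<2$, $u_\delta=(|u|^2+\delta)^{1/2}$ gives $\nabla u_\delta=|u|\nabla|u|/u_\delta$, so the cross term becomes $(p-2)\int Ru_\delta^{p-4}|u|^2|\nabla|u||_Q^2\,dx$, now with a negative sign. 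This is where the main obstacle lies: because $p-2<0$, both the elementary estimate $|u|^2\leq u_\delta^2$ and the gradient inequality \eqref{gradient |.| <...} must be used in the ``wrong'' direction, since each flips upon multiplication by $(p-2)$. One obtains first $(p-2)u_\delta^{p-4}|u|^2\geq(p-2)u_\delta^{p-2}$ and then $(p-2)|\nabla|u||_Q^2\geq(p-2)\sum_j|\nabla u_j|_Q^2$. Chaining with the main term again recovers the coefficient $p-1$, this time attached to $\sum_j|\nabla u_j|_Q^2$, and proves \eqref{lm p<2}.
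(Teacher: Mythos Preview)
Your proof is correct and follows essentially the same route as the paper's: discard the $V$-contribution via \eqref{diss of V}, integrate the divergence term by parts componentwise, and use the identities $\Re(\bar u_j\nabla u_j)=\tfrac12\nabla|u_j|^2$ together with Lemma~\ref{1st lemma of sec 3} and the estimate $|u|\le u_\delta$ (reversed by the negative factor $p-2$) to combine the main and cross terms into the coefficient $p-1$. The only cosmetic difference is that you expand $\nabla(Ru_\delta^{p-2})$ after first pairing $\bar u_j$ with $\nabla u_j$, whereas the paper expands $\nabla(Ru_\delta^{p-2}\bar u_j)$ directly into three pieces; the resulting integrals and the concluding inequalities are identical.
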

\begin{proof}
Applying integration by part formula and taking into account \eqref{diss of V} one obtains
\begin{align*}
P_\delta &= -\Re\langle L_p u,Ru_{\delta}^{p-2}u\rangle_{p,p'}\\
&= -\Re\int_{\R^d}\la div(Q\nabla u)(x),R(x)u_\delta ^{p-2}(x)u(x)\ra dx-\int_{\R^d}\Re\la V(x)u(x),u(x)\ra R(x)u_\delta ^{p-2}(x)dx\\
&\ge -\sum_{j=1}^{m}\int_{\R^d}\Re\left(div(Q\nabla u_j)(x)\bar{u}_j (x)R(x)u_\delta ^{p-2}(x)\right)dx\\
&= \sum_{j=1}^{m}\int_{\R^d}\Re\langle \nabla u_j(x),\nabla(Ru_\delta ^{p-2}u_j)(x)\rangle_{Q(x)}dx\\
&= \sum_{j=1}^{m}\int_{\R^d} |\nabla u_j(x)|_{Q(x)}^{2} R(x)u_{\delta}^{p-2}(x)dx + \sum_{j=1}^{m}\int_{\R^d}\Re\langle\nabla u_j (x),\nabla R(x)\rangle_{Q(x)}\bar{u}_j (x)u_{\delta}^{p-2}(x)dx\\
&+ \frac{p-2}{2}\sum_{j=1}^{m}\int_{\R^d}\Re\langle\nabla u_j (x),\nabla |u|^2 (x) \rangle_{Q(x)}\bar{u}_j(x)R(x)u_{\delta}^{p-4}(x)dx\\
&= \sum_{j=1}^{m}\int_{\R^d} |\nabla u_j(x)|_{Q(x)}^{2} R(x)u_{\delta}^{p-2}(x)dx + \frac{1}{2}\sum_{j=1}^{m}\int_{\R^d}\langle\nabla |u_j|^2 (x),\nabla R(x)\rangle_{Q(x)}u_{\delta}^{p-2}(x)dx\\
&+ \frac{p-2}{4}\sum_{j=1}^{m}\int_{\R^d}\langle\nabla |u_j|^2 (x),\nabla |u|^2 (x) \rangle_{Q(x)}R(x)u_{\delta}^{p-4}(x)dx\\
&= \sum_{j=1}^{m}\int_{\R^d} |\nabla u_j(x)|_{Q(x)}^{2} R(x)u_{\delta}^{p-2}(x)dx + \frac{1}{2}\int_{\R^d}\langle\nabla |u|^2 (x),\nabla R(x)\rangle_{Q(x)}u_{\delta}^{p-2}(x)dx\\
&+ \frac{p-2}{4}\int_{\R^d}\langle\nabla |u|^2 (x),\nabla |u|^2 (x) \rangle_{Q(x)}R(x)u_{\delta}^{p-4}(x)dx\\
&= \sum_{j=1}^{m}\int_{\R^d} |\nabla u_j(x)|_{Q(x)}^{2} R(x)u_{\delta}^{p-2}(x)dx+(p-2)\int_{\R^d}|\nabla |u|(x)|^2_{Q(x)}|u(x)|^2 R(x)u_{\delta}^{p-4}(x)dx\\
&+ \frac{1}{2}\int_{\R^d}\langle\nabla |u|^2 (x),\nabla R(x)\rangle_{Q(x)}u_{\delta}^{p-2}(x)dx.
\end{align*}
 Taking into account \eqref{gradient |.| <...} and that $u_\delta=|u|$ when $p\geq 2$, and $u_\delta\ge |u|$ and $p-2<0$ when $1<p<2$, one obtains \eqref{lm p>2} and \eqref{lm p<2}.
\end{proof}
The second step is a consequence of Lemma \ref{enonce-lm} and a modification of \cite[Proposition 3.2]{Okazawa}.
\begin{lm}
Let $\delta>0$. One has
\begin{equation}\label{est-P-delta}
P_\delta\geq-\dfrac{1}{4(p-1)}\int_{\R^d}u_\delta^{p-2}(x)|u(x)|^2\dfrac{|\nabla R(x)|_{Q(x)}^2}{R(x)}dx,
\end{equation}
and hence
\begin{equation}\label{est-dual product}
\Re\langle -L_p u,|B_{p,\varepsilon}u|^{p-2}B_{p,\varepsilon}u\rangle_{p,p'}\geq -\dfrac{1}{4(p-1)}\int_{\R^d}|u(x)|^p\dfrac{|\nabla R(x)|_{Q(x)}^2}{R(x)}dx.
\end{equation}
\end{lm}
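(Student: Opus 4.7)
The plan is to bound from below the sign-indefinite cross term appearing in both \eqref{lm p>2} and \eqref{lm p<2} by a weighted Young inequality, choosing the weight precisely so that the resulting negative multiple of $\int|\nabla|u||_Q^2\,R\,u_\delta^{p-2}\,dx$ cancels exactly against the first (positive) term of Lemma \ref{enonce-lm}, leaving only the desired $|\nabla R|_Q^2/R$ contribution.

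To carry this out, I first rewrite the cross term. Using $\nabla|u|^2=2|u|\nabla|u|$ (which follows from Lemma \ref{1st lemma of sec 3}), one has
\[
\frac{1}{2}\int_{\R^d}\langle\nabla|u|^2,\nabla R\rangle_{Q}\,u_\delta^{p-2}\,dx
= \int_{\R^d}|u|\,\langle\nabla|u|,\nabla R\rangle_{Q}\,u_\delta^{p-2}\,dx.
\]
Cauchy--Schwarz in the $Q$-inner product yields $\langle\nabla|u|,\nabla R\rangle_{Q}\ge -|\nabla|u||_{Q}|\nabla R|_{Q}$, and then Young's inequality $|xy|\le (p-1)x^2+\tfrac{1}{4(p-1)}y^2$ applied with $x=|\nabla|u||_{Q}\,R^{1/2}u_\delta^{(p-2)/2}$ and $y=|u||\nabla R|_{Q}\,R^{-1/2}u_\delta^{(p-2)/2}$ gives the pointwise bound
\[
|u||\nabla|u||_{Q}|\nabla R|_{Q}\,u_\delta^{p-2}
\;\le\; (p-1)|\nabla|u||_{Q}^{2}\,R\,u_\delta^{p-2}
+\frac{1}{4(p-1)}\,u_\delta^{p-2}\,|u|^{2}\,\frac{|\nabla R|_{Q}^{2}}{R}.
\]
Inserting this into Lemma \ref{enonce-lm}, the term $(p-1)\int|\nabla|u||_{Q}^{2}\,R\,u_\delta^{p-2}\,dx$ is absorbed completely: for $p\ge 2$ the cancellation is immediate, and for $1<p<2$ one first uses \eqref{gradient |.| <...} to estimate $\sum_j|\nabla u_j|_Q^2\ge|\nabla|u||_Q^2$ (legal since $p-1>0$) before performing the same cancellation. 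This establishes \eqref{est-P-delta}.

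The estimate \eqref{est-dual product} then follows by sending $\delta\to 0$ in \eqref{est-P-delta}: by \eqref{approximation of dual product} the left-hand side converges to $\Re\langle -L_p u,|B_{p,\varepsilon}u|^{p-2}B_{p,\varepsilon}u\rangle_{p,p'}$; on the right-hand side, the case $p\ge 2$ is trivial since $u_\delta=|u|$, while for $1<p<2$ the integrand $u_\delta^{p-2}|u|^{2}|\nabla R|_{Q}^{2}/R$ converges pointwise to $|u|^{p}|\nabla R|_{Q}^{2}/R$ and is dominated by it (because $u_\delta\ge|u|$ and $p-2<0$, with the integrand understood to vanish where $u=0$), so dominated convergence applies. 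The main difficulty is the sharp calibration of the weight in Young's inequality needed to produce exact cancellation against the Dirichlet term of the Lemma; everything else is routine.
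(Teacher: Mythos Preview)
Your proof is correct and is essentially the same as the paper's: the paper obtains the key inequality by expanding the nonnegative square $\int u_\delta^{p-2}\bigl|\sqrt{R}\,\nabla|u|+\tfrac{c}{\sqrt{R}}|u|\nabla R\bigr|_Q^2\,dx\ge 0$ with $c=\tfrac{1}{2(p-1)}$ (and the componentwise analogue with $\nabla u_j$ when $1<p<2$), which is exactly the completing-the-square version of your Cauchy--Schwarz plus weighted Young step. The only cosmetic difference is that for $1<p<2$ the paper completes the square directly with the $\nabla u_j$, whereas you first invoke \eqref{gradient |.| <...} to pass to $|\nabla|u||_Q^2$ and then cancel; both routes yield the same estimate.
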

\begin{proof}
Let $c=\frac{1}{2(p-1)}$. One can use, for $p\geq 2$, the inequality
\[ \int_{\R^d}u_\delta^{p-2}(x)\left|\sqrt{R(x)}\nabla|u|(x)+\frac{c}{\sqrt{R(x)}}|u(x)|\nabla R(x)\right|_{Q(x)}^2 dx\geq 0 \]
which implies that
\begin{eqnarray*}
0 &\leq &\int_{\R^d}u_\delta^{p-2}(x)R(x)|\nabla|u|(x)|_{Q(x)}^2 dx +c\int_{\R^d}u_\delta^{p-2}(x)\langle\nabla|u|^2(x),\nabla R(x) \rangle_{Q(x)}\\
 & &\quad + c^2\int_{\R^d}u_\delta^{p-2}(x)|u(x)|^2\dfrac{|\nabla R(x)|_{Q(x)}^2}{R}dx.
\end{eqnarray*}
Multiplying by $p-1$ and using \eqref{lm p>2} one obtains \eqref{est-P-delta}. On the other hand, for $1<p<2$, one can use the inequality
\[ \int_{\R^d}u_\delta^{p-2}(x)\sum_{j=1}^{m}\left|\sqrt{R(x)}\nabla u_j(x) +\frac{cu_j(x)}{\sqrt{R(x)}}\nabla R(x) \right|_{Q(x)}^2 dx\geq 0,\]
arguing similarly as above and using \eqref{lm p<2} one obtains \eqref{est-P-delta}. Estimate \eqref{est-dual product} follows now by letting $\delta\to 0$ in \eqref{est-P-delta}.
\end{proof}
We prove now the main theorem of this section,
\begin{thm}\label{thm. gen. of. s.g.}
Assume that Hypotheses \ref{Hyp. of KLMR} hold and there exist nonnegative constants $a$ and $b$ such that
\begin{equation}\label{estimate grad v_eps}
|\nabla v_\varepsilon (x)|_{Q(x)}^2\leq a(v_\varepsilon (x))^2+b(v_\varepsilon (x))^3
\end{equation}
for all $\varepsilon>0$ and $x\in\R^d$. Then ${L}_p-sB_p$ with domain $D(L_p)\cap D(B_p)$ generates a contractive $C_0$-semigroup in $L^p(\R^d,\C^m)$ for each $s>\dfrac{(p-1)b}{4}$.
\end{thm}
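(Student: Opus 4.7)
The plan is to apply Okazawa's Theorem \ref{Okazawa-1} with $A=-L_p$, $B=B_p$ and core $D=C_c^\infty(\R^d,\C^m)$. The m-accretivity of $B_p$ is immediate from its definition as a nonnegative multiplication operator, while that of $-L_p$ follows from the generation result of \cite{KLMR} combined with the dissipativity assumption \eqref{diss of V} (see Remark \ref{V-dissip}); that $C_c^\infty(\R^d,\C^m)$ is a core for $L_p$ is precisely Proposition \ref{Coincidence of domains}. It therefore suffices to verify, for every $u\in C_c^\infty(\R^d,\C^m)$ and every $\varepsilon>0$, an Okazawa-type bound of the form
\[ \Re\langle -L_pu,F(B_{p,\varepsilon}u)\rangle_{p,p'}\ge -a'\|u\|_p\|B_{p,\varepsilon}u\|_p-b'\|B_{p,\varepsilon}u\|_p^2 \]
with $b'=(p-1)b/4$; Theorem \ref{Okazawa-1} will then yield that $-L_p+sB_p$ is m-accretive on $D(L_p)\cap D(B_p)$ for every $s>(p-1)b/4$, which is equivalent to the stated generation result.

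The key computation starts from the bound \eqref{est-dual product}, inserts $R=v_\varepsilon^{p-1}$ so that $\nabla R=(p-1)v_\varepsilon^{p-2}\nabla v_\varepsilon$, and invokes the hypothesis \eqref{estimate grad v_eps} to obtain
\[ \frac{|\nabla R|_{Q}^2}{R}=(p-1)^2 v_\varepsilon^{p-3}|\nabla v_\varepsilon|_{Q}^2\le (p-1)^2\bigl(a\,v_\varepsilon^{p-1}+b\,v_\varepsilon^p\bigr). \]
Substituting this into \eqref{est-dual product} yields
\[ \Re\langle -L_pu,|B_{p,\varepsilon}u|^{p-2}B_{p,\varepsilon}u\rangle_{p,p'}\ge -\frac{(p-1)a}{4}\int_{\R^d}|u|^p v_\varepsilon^{p-1}\,dx-\frac{(p-1)b}{4}\|B_{p,\varepsilon}u\|_p^p. \]
A Hölder inequality applied to $\int |u|\cdot(|u|v_\varepsilon)^{p-1}\,dx$ controls the first integral by $\|u\|_p\|B_{p,\varepsilon}u\|_p^{p-1}$. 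Multiplying through by the normalisation factor $\|B_{p,\varepsilon}u\|_p^{2-p}$, which converts the test function $|B_{p,\varepsilon}u|^{p-2}B_{p,\varepsilon}u$ into the duality map $F(B_{p,\varepsilon}u)$, produces the desired Okazawa estimate with $c=0$, $a'=(p-1)a/4$ and $b'=(p-1)b/4$, so that Theorem \ref{Okazawa-1} applies.

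No major obstacle is expected beyond this algebraic reduction: the real analytic work is already contained in Lemma \ref{enonce-lm} and in the bound \eqref{est-dual product}, and the hypothesis \eqref{estimate grad v_eps} has been calibrated exactly so that $|\nabla R|_Q^2/R$ decomposes into one term proportional to $v_\varepsilon^{p-1}$ (yielding the $\|u\|_p\|B_{p,\varepsilon}u\|_p$ contribution after Hölder) and one proportional to $v_\varepsilon^p$ (yielding the $\|B_{p,\varepsilon}u\|_p^2$ contribution after normalisation), which is precisely the structure required by Theorem \ref{Okazawa-1}.
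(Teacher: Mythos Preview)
Your proposal is correct and follows essentially the same route as the paper: you invoke Okazawa's Theorem \ref{Okazawa-1} with $A=-L_p$, $B=B_p$, verify the prerequisites via Proposition \ref{Coincidence of domains} and \cite{KLMR}, substitute $R=v_\varepsilon^{p-1}$ into the key estimate \eqref{est-dual product}, apply the hypothesis \eqref{estimate grad v_eps}, and finish with H\"older and the normalisation by $\|B_{p,\varepsilon}u\|_p^{2-p}$. The constants you obtain ($c=0$, $a'=(p-1)a/4$, $b'=(p-1)b/4$) match the paper's exactly.
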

\begin{proof}
We will show the following inequality
\begin{equation}\label{est-to-prouve}
\Re\langle-L_p u,\|B_{p,\varepsilon}u\|^{2-p}|B_{p,\varepsilon}u|^{p-2}B_{p,\varepsilon}u \rangle_{p,p'} \geq -\frac{p-1}{4}a\|B_{p,\varepsilon}u\|\|u\|-\frac{p-1}{4}b\|B_{p,\varepsilon}u\|^2,
\end{equation}
for every $u\in C_c^\infty(\R^d,\C^m)$. Since from Proposition \ref{Hyp. of KLMR} we know that $C_c^\infty(\R^d,\C^m)$ is a core for $L_p$ and $-L_p$ is m-accretive, see \cite[Corollary 3.3]{KLMR}, we conclude by Theorem $\ref{Okazawa-1}$. According to \eqref{est-dual product} and \eqref{estimate grad v_eps}, one has
\begin{eqnarray*}
 & & \Re\langle-L_p u,\|B_{p,\varepsilon}u\|^{2-p}|B_{p,\varepsilon}u|^{p-2}B_{p,\varepsilon}u\rangle_{p,p'} \\
&\ge & -\dfrac{1}{4(p-1)}\|B_{p,\varepsilon}u\|^{2-p}\int_{\R^d}|u(x)|^p\dfrac{|\nabla R(x)|_{Q(x)}^2}{R(x)}dx\\
&\ge &-\dfrac{p-1}{4}\|B_{p,\varepsilon}u\|^{2-p}\int_{\R^d}|u(x)|^p (v_\varepsilon(x))^{p-3}|\nabla v_\varepsilon(x)|_{Q(x)}^2 dx\\
&\ge & -\dfrac{p-1}{4}a\|B_{p,\varepsilon}u\|^{2-p}\int_{\R^d}|u(x)|^p |v_\varepsilon(x)|^{p-1}dx\\
& &-\dfrac{p-1}{4}b\|B_{p,\varepsilon}u\|^{2-p}\int_{\R^d}|u(x)|^p |v_\varepsilon(x)|^{p}dx.
\end{eqnarray*}
Taking into account that $|B_{p,\varepsilon}(x)u|=|v_\varepsilon(x)||u(x)|$ and using H\"older's inequality, one obtains \eqref{est-to-prouve}. By Theorem \ref{Okazawa-1}, one conclude that $-(L_p-s B_p)$ is m-accretive for $s>\dfrac{(p-1)b}{4}$.
\end{proof}
 \begin{coro}\label{coro gen. of s.g.}
 Assume that there exists $c>0$ such that
 \begin{equation}\label{grad v< v}
 |\nabla v(x)|\le c\, v(x)
 \end{equation}
  for a.e. $x\in\R^d$, and Hypotheses \ref{Hyp. of KLMR} are satisfied. Then, $\tilde{L}_p=L_p+ B_p$ with domain $D(L_p)\cap D(B_p)$ generates a contractive $C_0$-semigroup on $L^p(\R^d,\C^m)$.
 \end{coro}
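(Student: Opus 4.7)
The plan is straightforward: I would verify the hypothesis \eqref{estimate grad v_eps} of Theorem \ref{thm. gen. of. s.g.} directly from the standing assumption \eqref{grad v< v}, showing in fact that one may take the constant $b$ equal to zero. Once this is done, the threshold $(p-1)b/4$ in Theorem \ref{thm. gen. of. s.g.} collapses to $0$, so that the conclusion applies for every $s>0$, and the choice $s=1$ yields the corollary.

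The first concrete step is the pointwise computation of $\nabla v_\varepsilon$. Writing $v_\varepsilon=v(1+\varepsilon v)^{-1}$, the chain rule (applicable a.e.\ because $v\in W^{1,\infty}_{loc}(\R^d)$) gives
\[
\nabla v_\varepsilon(x)=\frac{\nabla v(x)}{(1+\varepsilon v(x))^2}\qquad\text{for a.e. }x\in\R^d.
\]
This identity is automatically consistent on the zero set of $v$, since \eqref{grad v< v} forces $\nabla v=0$ a.e.\ wherever $v=0$. Combining this with \eqref{grad v< v} and the trivial bound $(1+\varepsilon v)^{-1}\le 1$ yields
\[
|\nabla v_\varepsilon(x)|\;\le\;\frac{c\, v(x)}{(1+\varepsilon v(x))^2}\;=\;\frac{c\, v_\varepsilon(x)}{1+\varepsilon v(x)}\;\le\;c\, v_\varepsilon(x),
\]
and invoking the upper ellipticity bound $|\xi|_{Q(x)}^2\le \eta_2|\xi|^2$ from \eqref{ell} one arrives at
\[
|\nabla v_\varepsilon(x)|_{Q(x)}^2\;\le\;\eta_2 c^2\, v_\varepsilon(x)^2,
\]
which is precisely \eqref{estimate grad v_eps} with the choice $a:=\eta_2 c^2$ and $b:=0$.

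With this estimate at hand, Theorem \ref{thm. gen. of. s.g.} applies for every $s>(p-1)b/4=0$. Taking $s=1$ shows that the operator $L_p-B_p$, with domain $D(L_p)\cap D(B_p)$, generates a contractive $C_0$-semigroup on $L^p(\R^d,\C^m)$, which is exactly the statement of the corollary (with $\tilde L_p$ the realization of $\mathcal Lu-vu$ introduced in Section 2). Essentially all the analytic difficulties—m-accretivity of $-L_p$, the Okazawa-type lower bound \eqref{est-dual product}, and the core property of $C_c^\infty(\R^d,\C^m)$—have already been absorbed into Theorem \ref{thm. gen. of. s.g.} and Proposition \ref{Coincidence of domains}, so the only thing left to check is the elementary gradient estimate above; there is no real obstacle to confront, only a one-line chain rule and the ellipticity of $Q$.
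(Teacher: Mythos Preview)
Your proof is correct and follows essentially the same route as the paper's own argument: compute $\nabla v_\varepsilon=(1+\varepsilon v)^{-2}\nabla v$, deduce $|\nabla v_\varepsilon|\le c\,v_\varepsilon$, conclude that \eqref{estimate grad v_eps} holds with $b=0$, and apply Theorem \ref{thm. gen. of. s.g.} with $s=1$. You are in fact slightly more careful than the paper in making the passage from the Euclidean bound to the $Q$-norm bound explicit (yielding $a=\eta_2 c^2$ rather than the paper's $a=c^2$), and you correctly identify $\tilde L_p$ as $L_p-B_p$, matching the definition $\tilde{\mathcal L}u=\mathcal Lu-vu$ from Section~2 (the ``$+$'' in the corollary statement is a typo).
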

 \begin{proof}
 One has $\nabla v_\varepsilon=\nabla(v(1+\varepsilon v)^{-1})=(1+\varepsilon v)^{-2}\nabla v$, which implies $|\nabla v_\varepsilon|\le c v_\varepsilon$. Thus \eqref{estimate grad v_eps} is verified with $a=c^2$ and $b=0$. Thus $L_p-sB_p$ with domain $D(L_p)\cap D(B_p)$ is m-accretive for all $s>0$. In particular, $\tilde{L}_p$ generates a contractive $C_0$-semigroup on $L^p(\R^d,\R^m)$.
 \end{proof}

\section{Further properties of the semigroup}
Let $Q$, $V$ satisfy Hypotheses \ref{Hyp. of KLMR} and $0\le v\in W^{1,\infty}_{loc}(\R^d)$ such that \eqref{grad v< v} holds. Consider $\tilde{V}=V-v I_m$ and denote by
$\{S_p(t)\}_{t\ge 0}$ and $\{\tilde{S}_p(t)\}_{t\ge 0}$ the contractive $C_0$-semigroups generated respectively by $L_p$ and $\tilde{L}_p$.\\
\medskip
We first prove consistency and characterize positivity of the semigroup $\{\tilde{S}_p(t)\}_{t\ge 0}$.
\begin{prop}\label{prop. positivity+consistence}
\begin{itemize}
\item[(1)] The semigroups $\{\tilde{S}_p(t)\}_{t\ge 0}$, $1<p<\infty$, are consistent.\\
\item[(2)] $\{\tilde{S}_p(t)\}_{t\ge 0}$ is positive if, and only if, the off-diagonal entries of $\tilde{V}$ are nonnegative, i.e. $v_{ij}\ge 0$ for all $i\neq j$.
\end{itemize}
\end{prop}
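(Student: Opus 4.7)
\textbf{Plan for Proposition \ref{prop. positivity+consistence}.}

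\textbf{(1) Consistency.} The plan is to approximate $v$ by bounded truncations and exploit the already-established consistency of the unperturbed family. Set $v_n:=\min\{v,n\}$. Then $-v_n I_m$ is a bounded perturbation of $L_p$ for each $p$; the resulting semigroup $T_p^{(n)}(t):=e^{t(L_p-v_n)}$ is given by the Dyson--Phillips series in $S_p$ and $v_n$, and since $\{S_p(t)\}_p$ is consistent by \cite{KLMR} and the truncation $v_n$ does not depend on $p$, the family $\{T_p^{(n)}(t)\}_p$ is consistent. One then shows that for $\lambda$ sufficiently large,
$$(\lambda-L_p+v_n)^{-1}f\;\longrightarrow\;(\lambda-\tilde L_p)^{-1}f\qquad\text{in }L^p(\R^d,\C^m)$$
as $n\to\infty$ whenever $f\in L^p\cap L^q$, and likewise in $L^q$; this is precisely the content of Okazawa's approximation applied to bounded truncations, since $\tilde L_p$ arises as the m-accretive limit of $L_p-v_n$. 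Consistency of the truncated resolvents passes through the limit, giving resolvent consistency on $L^p\cap L^q$, and the exponential formula $\tilde S_p(t)f=\lim_n(I-(t/n)\tilde L_p)^{-n}f$ promotes this to semigroup consistency.

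\textbf{(2) Positivity.} The off-diagonal entries of $\tilde V$ coincide with those of $V$ because $vI_m$ is diagonal. For the direction $(\Leftarrow)$, if $v_{ij}\ge 0$ for $i\ne j$, the unperturbed family $\{S_p(t)\}$ is positive, a cooperative-system fact obtainable from the weak maximum principle or from the quasi-positivity of the associated sesquilinear form. The scalar semigroup $e^{-tv}$ is trivially positive, and bounded-perturbation Trotter yields
$$T_p^{(n)}(t)f=\lim_{k\to\infty}\bigl(S_p(t/k)\,e^{-tv_n/k}\bigr)^k f,$$
a strong limit of products of positive operators, hence positive; passing $n\to\infty$ via the approximation of Part (1) shows $\tilde S_p(t)$ is positive.

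For the converse $(\Rightarrow)$, pick $i\ne j$, $x_0\in\R^d$, and a nonnegative $\varphi\in C_c^\infty(\R^d)$ with $\varphi(x_0)=1$ and $\nabla\varphi(x_0)=0$. Set $u_0:=\varphi\,e_j$, where $e_j$ denotes the $j$-th standard basis vector of $\C^m$. Then $u_0\ge 0$, $(u_0)_i\equiv 0$, and a direct computation gives $(\tilde L_p u_0)_i(x_0)=v_{ij}(x_0)$ (the divergence term vanishes since the $i$-th component of $u_0$ is identically zero). For any nonnegative $\psi\in C_c^\infty(\R^d)$, positivity of $\tilde S_p(t)$ yields
$$0\le\frac{1}{t}\int_{\R^d}\psi(x)\,(\tilde S_p(t)u_0)_i(x)\,dx,$$
and passing $t\to 0^+$ (legitimate because $u_0\in D(\tilde L_p)$) produces $\int_{\R^d}\psi(x)v_{ij}(x)\varphi(x)\,dx\ge 0$. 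Letting $\varphi,\psi$ concentrate at $x_0$ and invoking Lebesgue differentiation forces $v_{ij}\ge 0$ almost everywhere.

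\textbf{Main obstacle.} In (1), the genuine technicality is that a direct uniqueness argument for the full operator $\tilde L_p$ on $L^p+L^q$ is awkward; the truncation device $v_n$ sidesteps this by reducing the problem to the already-known consistency of $\{S_p(t)\}$ from \cite{KLMR} and then taking a controlled m-accretive limit. In (2), rigorously justifying the Trotter formula in a $p\ne 2$ Banach setting needs care but is covered by the bounded-perturbation version; the only other subtlety is extracting the pointwise bound on $v_{ij}$ from the integral inequality, which is standard.
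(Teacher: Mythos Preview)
Your proposal is correct, but the paper takes a more direct route. For both (1) and the forward implication in (2), the paper invokes the Trotter--Kato product formula
\[
\tilde S_p(t)f=\lim_{n\to\infty}\bigl(e^{-tv/n}S_p(t/n)\bigr)^n f
\]
directly for the \emph{unbounded} multiplication by $v$: consistency of $\{S_p(t)\}$ (from \cite{KLMR}) together with the obvious $p$-independence of $e^{-tv}$ gives (1) in one line, and positivity of both factors (citing \cite[Proposition~4.1]{KLMR} for $\{S_p(t)\}$) gives $(\Leftarrow)$ in (2). Your truncation-then-limit scheme via $v_n=\min\{v,n\}$ reaches the same destination by a longer road; its merit is that it relies only on the bounded-perturbation Trotter formula, sidestepping the question of whether Trotter--Kato is available for the unbounded pair in $L^p$ with $p\ne 2$, a point the paper does not comment on. One small caveat: the resolvent convergence you invoke is not literally ``Okazawa's approximation''---Okazawa uses the Yosida regularisation $v/(1+\varepsilon v)$, not the cut-off $\min\{v,n\}$---but the convergence you need is routine to establish directly. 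For the converse $(\Rightarrow)$ in (2), the paper quotes the minimum principle for positive semigroups \cite[C-II, Proposition~1.7]{Yellow book}, which yields $\int_{\R^d} v_{ij}\varphi^2\,dx\ge 0$ for every $0\le\varphi\in C_c^\infty(\R^d)$; this is exactly the integral inequality you obtain by differentiating at $t=0^+$, so the two arguments coincide in substance, the paper's being a packaged citation and yours the underlying computation.
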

\begin{proof}
\begin{itemize}
\item[(1)] The first assertion follows from the consistency of $\{S_p(t)\}$ and $\{e^{-tv}\}$ and by the Trotter-Kato product formula
\begin{equation}\label{Trotter Kato}
\tilde{S}(t)=\lim_{n\to\infty}\left(e^{-tv/n}S_p(t/n)\right)^n.
\end{equation}
\item[(2)] Since the off-diagonal entries of $V$ and $\tilde{V}$ are equal, it follows that, if they are nonnegative, then $\{S_p(t)\}$ is positive, see \cite[Proposition 4.1]{KLMR}. Now, applying \eqref{Trotter Kato} the positivity of $\{\tilde{S}_p(t)\}$ follows. Conversely, assume that $\{\tilde{S}_p(t)\}$ is positive. By the minimum principle for positive semigroups, see \cite[C-II Proposition 1.7]{Yellow book}, one obtains that $\langle\tilde{L}_p(\varphi e_j),\varphi e_i\rangle\ge 0$ for all $i\neq j\in\{1,\dots,m\}$ and $0\le\varphi\in C_c^\infty(\R^d)$, where $e_j$ denotes the vector of the canonical orthonormal basis of $\R^m$. This implies that $$\int_{\R^d}v_{ij}(x)\varphi^2(x) dx\ge 0$$ for every $\varphi\in C_c^\infty(\R^d)$. Thus, $v_{ij}\ge 0$ almost everywhere.
\end{itemize}
\end{proof}
\subsection{Compactness and spectrum}
The aim of this paragraph is to give conditions under which the resolvent operator of $\tilde{L}_p$ is compact. As a consequence one deduces that the spectrum of $\tilde{L}_p$ consists only on eigenvalues and is discrete.
To this purpose we first state a similar result as Proposition \ref{Coincidence of domains}, which yields the coincidence between the natural and maximal domains of $\tilde{L}_p$. We omit the proof, since it is the same as the one of
Proposition \ref{Coincidence of domains} where one has to substitute $V$ with $\tilde{V}$.
\begin{prop}
Let $1<p<\infty$. Assume that Hypotheses \ref{Hyp. of KLMR} and \eqref{grad v< v} are satisfied. Then
\begin{equation*}
D(\tilde{L}_p)=\{u\in W^{2,p}_{loc}(\R^d,\C^m) : \tilde{L}u=div(Q\nabla u)+\tilde{V}u\in L^p(\R^d,\C^m)\}=:D_{p,max}(\tilde{\mathcal{L}}).
\end{equation*}
\end{prop}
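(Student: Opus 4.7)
The easy inclusion $D(\tilde L_p)\subseteq D_{p,\max}(\tilde{\mathcal L})$ is immediate from the definition $D(\tilde L_p)=D(L_p)\cap D(B_p)$ in Corollary \ref{coro gen. of s.g.}, together with the characterization $D(L_p)=\{u\in W^{2,p}(\R^d,\C^m):Vu\in L^p(\R^d,\C^m)\}$ from \cite[Corollary 3.3]{KLMR}: any such $u$ lies in $W^{2,p}_{loc}(\R^d,\C^m)$ and satisfies $\tilde{\mathcal L}u=\operatorname{div}(Q\nabla u)+Vu-vu\in L^p(\R^d,\C^m)$.

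For the reverse inclusion I would proceed exactly as in Proposition \ref{Coincidence of domains}, replacing $V$ by $\tilde V=V-vI_m$. The key point is that $\tilde V$ still satisfies the dissipativity estimate \eqref{diss of V}: since $v\ge 0$, for every $x$ and $\xi\in\C^m$ one has $\Re\la \tilde V(x)\xi,\xi\ra=\Re\la V(x)\xi,\xi\ra-v(x)|\xi|^2\le\beta|\xi|^2$. Since $-\tilde L_p$ is m-accretive by Corollary \ref{coro gen. of s.g.}, it suffices to show that $\lambda-\tilde{\mathcal L}$ is injective on $D_{p,\max}(\tilde{\mathcal L})$ for some $\lambda>0$: indeed, given $u\in D_{p,\max}(\tilde{\mathcal L})$, setting $w:=(\lambda-\tilde L_p)^{-1}(\lambda-\tilde{\mathcal L})u\in D(\tilde L_p)\subseteq D_{p,\max}(\tilde{\mathcal L})$, the difference $u-w$ belongs to $D_{p,\max}(\tilde{\mathcal L})$ and lies in the kernel of $\lambda-\tilde{\mathcal L}$, forcing $u=w\in D(\tilde L_p)$.

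To prove injectivity, take real-valued $u\in D_{p,\max}(\tilde{\mathcal L})$ with $(\lambda-\tilde{\mathcal L})u=0$ (admissible because the coefficients of $\tilde{\mathcal L}$ are real-valued, so real and imaginary parts of $u$ can be treated separately). Pick a smooth cutoff $\zeta$ with $\chi_{B(1)}\le\zeta\le\chi_{B(2)}$ and set $\zeta_n(\cdot)=\zeta(\cdot/n)$. For $p\ge 2$, multiply the identity $(\lambda-\tilde{\mathcal L})u=0$ by $\zeta_n^2|u|^{p-2}u$ and integrate over $\R^d$; for $1<p<2$, use the regularized tester $\zeta_n^2(|u|^2+\varepsilon)^{(p-2)/2}u$ and let $\varepsilon\to 0$ at the end. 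After integration by parts, the principal-part term yields the quadratic forms in $\nabla u_j$ and $\nabla|u|$ that are handled as in Proposition \ref{Coincidence of domains}; the potential term produces $-\int_{\R^d}\la\tilde V u,u\ra|u|^{p-2}\zeta_n^2\,dx\ge-\int\la V u,u\ra|u|^{p-2}\zeta_n^2\,dx\ge 0$ thanks to the dissipativity of $\tilde V$ (the extra scalar piece $\int v|u|^p\zeta_n^2\,dx$ is nonnegative and can simply be dropped). The only term of undefined sign is the mixed gradient term involving $\nabla\zeta_n$, which by Cauchy--Schwarz and \eqref{ell} is bounded in absolute value by $\eta_2\|\nabla\zeta\|_\infty^2 n^{-2}\int|u|^p\,dx$. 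One therefore arrives at
\[
\lambda\int_{\R^d}\zeta_n^{2}(x)|u(x)|^p\,dx\;\le\;\frac{\eta_2\|\nabla\zeta\|_\infty^2}{n^2}\int_{\R^d}|u(x)|^p\,dx,
\]
and letting $n\to\infty$ gives $\|u\|_p^p\le 0$, hence $u=0$.

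I do not expect any genuine obstacle: the computation is essentially that of Proposition \ref{Coincidence of domains}. The only point requiring mild care is to verify, in the $1<p<2$ regime, that passing to the limit $\varepsilon\to 0$ in the regularized tester is legitimate; this is routine because $u\in W^{2,p}_{loc}(\R^d,\C^m)$ (already part of the definition of $D_{p,\max}(\tilde{\mathcal L})$) and the integrands are dominated by $|u|^{p-1}$ times locally integrable factors on the compact support of $\zeta_n$. Thus the proof reduces verbatim to the one of Proposition \ref{Coincidence of domains}.
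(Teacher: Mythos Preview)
Your proposal is correct and takes exactly the approach the paper intends: the paper omits the proof entirely, stating only that it is the same as Proposition~\ref{Coincidence of domains} with $\tilde V$ substituted for $V$, and your write-up spells out precisely that substitution (together with the observation that $\tilde V$ inherits the dissipativity estimate \eqref{diss of V} because $v\ge 0$). There is nothing to add.
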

\begin{rmk}\label{rmk equiv. of norms}
An important consequence of the above proposition is the equivalence between the graph norm of $\tilde{L}_p$ and the norms $u\mapsto |||u|||_1:=\|L_p u\|_p+\|vu\|_p$ and $u\mapsto |||u|||_2:=\|u\|_{2,p}+\|\tilde{V}u\|_p$. This information will be very useful for the proof of compactness.
\end{rmk}
\begin{prop}\label{Prop Compactness}
Let $1<p<\infty$. Assume that Hypotheses \ref{Hyp. of KLMR} and \eqref{grad v< v} are satisfied. Assume further that one of the following assertions holds true:
\begin{itemize}
\item[(i)] there exists $\rho:\R^d\to\R^+$ measurable such that $\lim_{|x|\to\infty}\rho(x)=\infty$ and
\begin{equation}\label{comp. cond.}
|V(x)\xi|\ge \rho(x)|\xi|,\qquad \forall x\in\R^d, \xi\in\C^m;
\end{equation}
\item[(ii)] $\lim_{|x|\to\infty}v(x)=\infty$.
\end{itemize}
Then, $\tilde{L}_p$ has a compact resolvent in $L^p(\R^d,\C^m)$. Therefore its spectrum is independent of $p \in (1,\infty)$ and consists of eigenvalues only.
\end{prop}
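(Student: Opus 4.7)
The plan is to prove that the embedding $D(\tilde{L}_p)\hookrightarrow L^p(\R^d,\C^m)$ is compact when $D(\tilde{L}_p)$ is equipped with the graph norm; compactness of the resolvent follows immediately, and then the Riesz--Schauder theorem gives that $\sigma(\tilde{L}_p)$ consists only of eigenvalues of finite multiplicity with no finite accumulation point. The $p$-independence of $\sigma(\tilde{L}_p)$ will follow from the consistency of the resolvents (a consequence of Proposition \ref{prop. positivity+consistence}(1)) together with a standard argument for consistent families with compact resolvent.

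To prove the compact embedding, take a sequence $\{u_n\}\subset D(\tilde{L}_p)$ that is bounded in the graph norm. By Remark \ref{rmk equiv. of norms}, the graph norm is equivalent to $\|u\|_{W^{2,p}}+\|\tilde{V}u\|_p$ and to $\|L_pu\|_p+\|vu\|_p$. Consequently $\{u_n\}$ is bounded in $W^{2,p}(\R^d,\C^m)$ while the sequences $\{vu_n\}$ and $\{\tilde{V}u_n\}$ (and hence $\{Vu_n\}$, since $\|Vu_n\|_p\le\|\tilde{V}u_n\|_p+\|vu_n\|_p$) are bounded in $L^p(\R^d,\C^m)$. Applying the Rellich--Kondrachov theorem on each ball $B(k)$ together with a diagonal extraction produces a subsequence (still denoted $\{u_n\}$) which is Cauchy in $L^p(B(k),\C^m)$ for every $k\in\N$.

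It remains to establish tightness: for every $\eta>0$ there exists $R>0$ with $\int_{|x|>R}|u_n(x)|^p\,dx<\eta$ uniformly in $n$. Combined with the local Cauchy property this upgrades $\{u_n\}$ to a Cauchy sequence in $L^p(\R^d,\C^m)$, proving the compact embedding. Under (ii), for $R$ large enough so that $v(x)\ge \kappa$ on $\{|x|>R\}$, one has
\begin{equation*}
\int_{|x|>R}|u_n(x)|^p\,dx\le \kappa^{-p}\int_{|x|>R}v(x)^p|u_n(x)|^p\,dx\le \kappa^{-p}\|vu_n\|_p^p,
\end{equation*}
and letting $R\to\infty$ forces $\kappa\to\infty$ by assumption. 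Under (i), the pointwise bound $|V(x)u_n(x)|\ge \rho(x)|u_n(x)|$ from \eqref{comp. cond.} gives the analogous estimate with $\rho$ in place of $v$, using the already-established uniform bound on $\|Vu_n\|_p$.

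The key technical input is Remark \ref{rmk equiv. of norms}: without the equivalence of the graph norm with the two stronger expressions, one could not extract simultaneously the $W^{2,p}$-bound (needed for local compactness) and the weighted $L^p$-bounds (needed for tightness). I expect this bookkeeping — and in particular the reduction in case (i) from a bound on $\|\tilde{V}u_n\|_p$ to one on $\|Vu_n\|_p$ via the $\|vu_n\|_p$-bound — to be the only delicate point; the $p$-independence of the spectrum is then classical once compactness and consistency are in hand.
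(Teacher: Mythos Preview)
Your proof is correct and follows essentially the same approach as the paper: both use Remark \ref{rmk equiv. of norms} to extract a $W^{2,p}$-bound for local Rellich--Kondrachov compactness together with a weighted $L^p$-bound for tightness at infinity, then invoke consistency of the resolvents for $p$-independence of the spectrum. The only cosmetic differences are that the paper uses a total-boundedness (finite $\varepsilon$-net) argument in place of your sequential diagonal extraction, and it unifies cases (i) and (ii) via the single inequality $|\tilde{V}(x)\xi|^2\ge |V(x)\xi|^2+v(x)^2|\xi|^2$ (from \eqref{diss of V}) rather than treating them separately.
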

\begin{proof}
Let us define $\tilde{\rho}=\rho$ if (i) is satisfied and $\tilde{\rho}=v$ if (ii) is satisfied (and $\tilde{\rho}=\min(\rho,v)$ if both (i) and (ii) are satisfied). Then $|\tilde{V}(x)\xi|\ge \tilde{\rho}(x)|\xi|$ for all $x\in\R^d$ and $\xi\in\C^m$. Indeed, fix $x\in\R^d$ and $\xi\in\C^m$. Then, by \eqref{diss of V},
\[ |\tilde{V}(x)\xi|^2=|V(x)\xi|^2-2v(x)\Re \langle V(x)\xi,\xi\rangle+v(x)^2|\xi|^2\ge |V(x)\xi|^2+v(x)^2|\xi|^2. \]
 Thus in both two cases $|\tilde{V}(x)\xi|\ge \tilde{\rho}(x)|\xi|$, and of course in both cases $\lim_{|x|\to\infty}\tilde{\rho}(x)=\infty$. Thus,
\begin{equation}
\|\tilde{V}_pu\|_p^p \geq \int_{\R^d}\tilde{\rho} (x)^p|u(x)|^p\, dx
\label{freccia}
\end{equation}
for every $u\in D(\tilde{L}_p)$.
Let us now prove that the closed unit ball of $D(\tilde{L}_p)$ is compact in $L^p(\R^d;\C^m)$. To do so, let $u\in D(\tilde{L}_p)$ such that $\|u\|_{D(\tilde{L}_p)}\le 1$. Then, by Remark \ref{rmk equiv. of norms},  $\|\tilde{V}_pu\|_p\le |||u|||_2\le C$, for some constant $C\ge 1$.
Let $\varepsilon>0$ and $R>0$ sufficiently large so that $\varepsilon\tilde{\rho}(x)\ge C$ for all $x\in \R^d\setminus B(R)$. Then, from \eqref{freccia}, we deduce that
\begin{align*}
\int_{\R^d\setminus B(R)}|u(x)|^p\, dx\le &\frac{\varepsilon^p}{C^p}\int_{\R^d\setminus B(R)}\tilde{\rho}(x)^p|u(x)|^p\,dx\\
\le &\frac{\varepsilon^p}{C^p}\int_{\R^d}\tilde{\rho}(x)^p|u(x)|^p\,dx
\le \frac{\varepsilon^p}{C^p}\|V_pu\|_p^p\le \varepsilon^p.
\end{align*}
Since $D(\tilde{L}_p)$ is locally continuously embedded in
$W^{2,p}(B(R);\C^m)$, which is compactly embedded into $L^p(B(R); \C^m)$, by the Rellich-Kondarov theorem, one can get a finite sequence of functions $g_1,\ldots,g_k\in L^p(B(R);\C^m)$ such that, for
every $u$ in the unit ball of $D(\tilde{L}_p)$, there
exists $j\in \{1,\ldots, k\}$ such that
\begin{eqnarray*}
\int_{B(R)}|u(x)-g_j(x)|^p\,dx\le\varepsilon^p.
\end{eqnarray*}
Denoting the trivial extension of $g_j$ to $\R^d$ by $\bar g_j$, one has
\begin{align*}
\int_{\R^d}|u(x)-\bar g_j(x)|^p\,dx=\int_{B(R)}|f(x)-g_j(x)|^p\,dx
+\int_{\R^d\setminus B(R)}|u(x)|^p\,dx\le 2\varepsilon^p.
\end{align*}
This shows that the unit ball of $D(\tilde{L}_p)$ is covered by the balls in $L^p(\R^d;\C^m)$ centered at $\overline g_j$ of radius $2^{\frac{1}{p}}\varepsilon$. As $\varepsilon >0$ was arbitrary, it follows that the unit ball of $D(\tilde{L}_p)$ is totally bounded
in $L^p(\R^d;\C^m)$. Therefore $D(\tilde{L}_p)$ is compactly embedded in $L^p(\R^d;\C^m)$ and hence $\tilde{L}_p$ has compact resolvent.

The spectral mapping theorem for the resolvent (cf. \cite[Theorem VI.1.13]{Nag}) and spectral properties of compact operators show that the spectrum $\sigma(\tilde{L}_p)$ of $\tilde{L}_p$ consists only of eigenvalues. Finally the $p$-independence of $\sigma(\tilde{L}_p)$ follows from \cite[Corollary 1.6.2]{Davies}, since the resolvent operators $(\lambda -\tilde{L}_p)^{-1}$ are consistent (see Proposition \ref{prop. positivity+consistence}.(1)) and compact.
\end{proof}
\subsection{Analyticity}
As it have been proved in \cite[Example 3.5]{KLMR}, Hypotheses \ref{Hyp. of KLMR} do not lead, in general, to the generation of an analytic semigroup. The following result yields a sufficient condition.
\begin{prop}
Let $1<p<\infty$. Assume that Hypotheses \ref{Hyp. of KLMR} and \eqref{grad v< v} are satisfied and there exists $M>0$ such that
\begin{equation}\label{cond. sectoriality}
\Re\langle -\tilde{V}(x)\xi,\xi \rangle\ge M\,|\Im\langle \tilde{V}(x)\xi,\xi \rangle|
\end{equation}
for all $x\in\R^d$ and $\xi\in\C^m$. Then, $\tilde{L}_p$ generates an analytic semigroup on $L^p(\R^d,\C^m)$.
\end{prop}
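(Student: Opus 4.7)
The strategy is to verify the sectoriality criterion \eqref{sectoriality carac} for $-\tilde{L}_p$. Since Corollary \ref{coro gen. of s.g.} already gives that $-\tilde{L}_p$ is m-accretive, and $C_c^\infty(\R^d,\C^m)$ is a core for $\tilde{L}_p$ (by the same argument as in Proposition \ref{Coincidence of domains}), it suffices to produce a constant $M_L>0$ with
\[
\Re\langle -\tilde{L}_p u, F(u)\rangle_{p,p'}\ \ge\ M_L\,|\Im\langle -\tilde{L}_p u, F(u)\rangle_{p,p'}|
\]
for every $u\in C_c^\infty(\R^d,\C^m)$. I would split this pairing as the diffusion contribution $\langle -\mathrm{div}(Q\nabla u), F(u)\rangle_{p,p'}$ plus the potential contribution $\langle -\tilde{V}u, F(u)\rangle_{p,p'}$ and treat the two summands separately.

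For the potential part, applying hypothesis \eqref{cond. sectoriality} pointwise with $\xi=u(x)$, multiplying by the nonnegative weight $\|u\|_p^{2-p}|u(x)|^{p-2}$, integrating, and using $\bigl|\int\phi\bigr|\le\int|\phi|$ yields at once
\[
\Re\langle -\tilde{V}u, F(u)\rangle_{p,p'}\ \ge\ M\,|\Im\langle -\tilde{V}u, F(u)\rangle_{p,p'}|.
\]
For the diffusion part, integration by parts — performed on the regularization $u_\delta=(|u|^2+\delta)^{1/2}$ used in the proof of Lemma \ref{enonce-lm} and then letting $\delta\to 0^+$ — gives, writing $A:=\int|u|^{p-2}\sum_j|\nabla u_j|_Q^2\,dx$, $B:=\int|u|^{p-2}|\nabla|u||_Q^2\,dx$, and $Y:=\sum_j\Im(\bar u_j\nabla u_j)$, the identity
\[
\langle -\mathrm{div}(Q\nabla u), F(u)\rangle_{p,p'}\ =\ \|u\|_p^{2-p}\Bigl(A+(p-2)B+i(p-2)\int_{\R^d}|u|^{p-3}\langle\nabla|u|,Y\rangle_Q\,dx\Bigr).
\]
The pointwise bound $|Y|_Q\le |u|\bigl(\sum_j|\nabla u_j|_Q^2\bigr)^{1/2}$ — itself a consequence of $|\Im(\bar u_j\nabla u_j)|_Q\le |u_j||\nabla u_j|_Q$ and Cauchy--Schwarz over $j$ — combined with Cauchy--Schwarz in the integral, controls the modulus of the cross-term by $|p-2|\sqrt{AB}$. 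Then an AM--GM bound (for $p\ge 2$) and the inequality $B\le A$ from Lemma \ref{1st lemma of sec 3} (for $1<p<2$) produce a constant $c_p>0$ depending only on $p$ such that
\[
|\Im\langle -\mathrm{div}(Q\nabla u), F(u)\rangle_{p,p'}|\ \le\ c_p\,\Re\langle -\mathrm{div}(Q\nabla u), F(u)\rangle_{p,p'}.
\]

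Adding the two estimates — both real parts being nonnegative — yields \eqref{sectoriality carac} with $M_L=\min(M,1/c_p)$, so that $-\tilde{L}_p$ is sectorial of angle less than $\pi/2$ and $\tilde{L}_p$ therefore generates an analytic semigroup. The main technical obstacle is to justify the integration by parts rigorously when $1<p<2$, where $|u|^{p-2}$ is singular at the zeros of $u$; this is handled via the $u_\delta$-regularization, a careful tracking of both real and imaginary parts as in Lemma \ref{enonce-lm}, and dominated convergence in the limit $\delta\to 0^+$.
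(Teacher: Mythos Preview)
Your proposal is correct and follows essentially the same route as the paper: split $\langle -\tilde{L}_p u, F(u)\rangle_{p,p'}$ into the diffusion part and the potential part, use hypothesis \eqref{cond. sectoriality} pointwise for the latter, obtain a $p$-dependent sectoriality constant for the former, and combine via $M_L=\min(M,1/c_p)$. The only difference is that the paper does not compute the diffusion estimate by hand but quotes it directly from \cite[Theorem 3.9]{Ouhabaz}, which gives
\[
\Re\langle -D_p u,|u|^{p-2}u\rangle_{p,p'}\ \ge\ \frac{2\sqrt{p-1}}{|p-2|}\,\bigl|\Im\langle -D_p u,|u|^{p-2}u\rangle_{p,p'}\bigr|
\]
for $u\in D(\tilde{L}_p)\subset W^{2,p}(\R^d,\C^m)$; your integration-by-parts computation with the $u_\delta$-regularization and the $A$, $B$, $Y$ bookkeeping is precisely a self-contained re-derivation of that result (with a possibly non-optimal constant), so the arguments are the same in substance.
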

 \begin{proof}
 Let $u\in D(\tilde{L}_p)\subseteq W^{2,p}(\R^d,\C^m)$. According to \cite[Theorem 3.9]{Ouhabaz}, one has
 \[\Re\langle-D_p u,|u|^{p-2}u\rangle_{p,p'}\ge c_p|\Im\langle-D_p u,|u|^{p-2}u\rangle_{p,p'}|,\]
 where $D_p u=div(Q\nabla u)$ and $c_p=\frac{2\sqrt{p-1}}{|p-2|}$ if $p\neq 2$ and any positive constant if $p=2$. Then,
 \begin{align*}
 \Re\langle-\tilde{L}_p u,|u|^{p-2}u\rangle_{p,p'}&=\Re\langle-D_p u,|u|^{p-2}u\rangle_{p,p'}+Re\langle-\tilde{V}_p u,|u|^{p-2}u\rangle_{p,p'}\\
 &\ge c_p|\Im\langle-D_p u,|u|^{p-2}u\rangle_{p,p'}|+\int_{\R^d}\Re\langle-\tilde{V}(x)u(x),u(x)\rangle|u(x)|^{p-2}dx\\
 &\ge c_p|\Im\langle-D_p u,|u|^{p-2}u\rangle_{p,p'}|+M\int_{\R^d}|\Im\langle-\tilde{V}(x)u(x),u(x)\rangle||u(x)|^{p-2}dx\\
 & \ge M_p |\Im\langle (D_p+V_p) u,|u|^{p-2}u\rangle_{p,p'}|=M_p |\Im\langle \tilde{L}_p u,|u|^{p-2}u\rangle_{p,p'}|,
 \end{align*}
 where $M_p=\inf(c_p,M)$. This implies that $-\tilde{L}_p$ is sectorial of angle less than $\pi/2$ and then $\tilde{L}_p$ generates an analytic semigroup on $L^p(\R^d,\C^m)$.
 \end{proof}
 \begin{examps}
 The condition \eqref{cond. sectoriality} is satisfied for symmetric potential matrices but never for antisymmetric ones. Moreover, it has been proved in \cite[Example 4.5]{KLMR} that the semigroup generated by $L_p$ with the antisymmetric potential $V(x)=\begin{pmatrix}
   0 & -x\\
   x & 0
   \end{pmatrix}$ is not analytic.  However, we recover analyticity when perturbing $V$ by $(1+|x|^r)I_m$, for some $r\ge 1$.
   Indeed, consider $\tilde{V}:\R\to\R^{2\times 2}$ given by
 \[\tilde{V}(x)=\begin{pmatrix}
  -(1+|x|^r) & -x\\
 x & -(1+|x|^r)
 \end{pmatrix}=\begin{pmatrix}
 0 & -x\\
 x & 0
 \end{pmatrix}-(1+|x|^r)I_2,\]
 where $r\ge 1$. Let us show that $\tilde{V}$ verify \eqref{cond. sectoriality}. For $\xi=\begin{pmatrix}
  \xi_1\\
  \xi_2
  \end{pmatrix}\in \C^2$ one has
  \begin{align*}
  \langle\tilde{V}(x)\xi,\xi\rangle=-(1+|x|^r)(\xi_1^2+\xi_2^2)+x(\xi_1\bar{\xi}_2-\bar{\xi}_1\xi_2).
  \end{align*}
  Then, $$\Re\langle\tilde{V}(x)\xi,\xi\rangle=-(1+|x|^r)(\xi_1^2+\xi_2^2)$$ and $$\Im\langle\tilde{V}(x)\xi,\xi\rangle=x(\xi_1\bar{\xi}_2-\bar{\xi}_1\xi_2).$$
  Moreover, one has
  \[\left|\Im\langle\tilde{V}(x)\xi,\xi\rangle\right|\le 2|x||\xi_1\xi_2|\le (1+|x|^r)(\xi_1^2+\xi_2^2)=\Re\langle-\tilde{V}(x)\xi,\xi\rangle.\]
Hence \eqref{cond. sectoriality} holds for $\tilde{V}$.
 \end{examps}
\section{Kernel estimates}

We use the same notation and assume the same hypotheses of Section 4. We start by giving a generation result on $L^{1}(\R^d,\C^m)$ for a suitable realisation of $\tilde{\mathcal{L}}$ and an $L^p-L^q$-estimate for $\{\tilde{S}(t)\}_{t\geq 0}$. For the proof of the following two proposition one can see \cite[Theorem 3.7]{KLMR} and \cite[Section 4.2]{KLMR}.
  \begin{prop}\label{L_1 result}
  The restriction of $\{\tilde{S}_2(t)\}_{t\geq 0}$ to $L^{2}(\R^d,\C^m)\cap L^{1}(\R^d,\C^m)$ can be extended to a contractive $C_0$-semigroup $\{\tilde{S}_1(t)\}_{t\geq 0}$ on $L^{1}(\R^d,\C^m)$. Moreover,  $\{\tilde{S}_1(t)\}_{t\geq 0}$ is consistent with $\{\tilde{S}_p(t)\}_{t\geq 0}$, $1<p<\infty$, and the generator $\tilde{L}_1$ of $\{\tilde{S}_1(t)\}_{t\geq 0}$ coincides with all $\tilde{L}_p$'s, $1<p<\infty$, on $C_c^\infty(\R^d,\C^m)$.
  \end{prop}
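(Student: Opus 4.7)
My plan is to follow the strategy of \cite[Theorem 3.7]{KLMR}, extending the $L^2$-semigroup to $L^1$ by density. The principal tools are the uniform $L^p$-contractivity for $p\in(1,\infty)$, consistency (Proposition \ref{prop. positivity+consistence}(1)), Fatou's lemma, and Scheff\'e's theorem. The core $L^1$-estimate is obtained as follows: fix $u\in C_c^\infty(\R^d,\C^m)$ and set $w:=\tilde{S}_2(t)u$. By consistency, $w=\tilde{S}_p(t)u$ for every $p\in(1,\infty)$, so $\|w\|_p\le\|u\|_p$. Since $|w|^p\to|w|$ pointwise as $p\to 1^+$, Fatou yields
\[
\|w\|_1\le\liminf_{p\to 1^+}\|w\|_p^p\le\liminf_{p\to 1^+}\|u\|_p^p=\|u\|_1,
\]
the last equality by dominated convergence (compact support of $u$). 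Density of $C_c^\infty(\R^d,\C^m)$ in $L^1(\R^d,\C^m)$ then extends $\tilde{S}_2(t)$ uniquely to a contraction $\tilde{S}_1(t)$ on $L^1(\R^d,\C^m)$; the semigroup law and consistency with $\{\tilde{S}_p(t)\}_{p>1}$ on $L^1\cap L^p$ carry over by density.

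For strong continuity I argue on the dense subspace $C_c^\infty(\R^d,\C^m)$. Given $u\in C_c^\infty$, $L^2$-convergence $\tilde{S}_2(t)u\to u$ provides pointwise a.e.\ convergence along some sequence $t_k\to 0$. Combining $\|\tilde{S}_2(t_k)u\|_1\le\|u\|_1$ with Fatou applied to this a.e.\ convergence forces $\|\tilde{S}_2(t_k)u\|_1\to\|u\|_1$, and Scheff\'e's theorem then yields $\tilde{S}_2(t_k)u\to u$ in $L^1$. A standard subsequence-of-subsequence argument upgrades this to $\tilde{S}_1(t)u\to u$ in $L^1$ as $t\to 0^+$, and density plus uniform contractivity extend strong continuity to all of $L^1$. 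For the generator, note that $\tilde{\mathcal{L}}u=div(Q\nabla u)+\tilde{V}u$ is bounded with compact support, hence in $L^1\cap L^p$ for every $p$. The $L^p$-Bochner identity $\tilde{S}_p(t)u-u=\int_0^t \tilde{S}_p(s)\tilde{\mathcal{L}}u\,ds$ transfers to $L^1$ via consistency of $\tilde{S}_p(s)\tilde{\mathcal{L}}u=\tilde{S}_1(s)\tilde{\mathcal{L}}u$ and the $L^1$-strong continuity of the integrand; dividing by $t$ and letting $t\to 0^+$ yields $u\in D(\tilde{L}_1)$ with $\tilde{L}_1 u=\tilde{\mathcal{L}}u=\tilde{L}_p u$.

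I expect the subtlest step to be the Scheff\'e-based proof of $L^1$-strong continuity, which crucially relies on the reverse Fatou inequality $\|u\|_1\le\liminf\|\tilde{S}_2(t_k)u\|_1$ (from a.e.\ convergence along the chosen subsequence) to force $L^1$-norm convergence; without the matching upper bound from $L^1$-contractivity, Scheff\'e's theorem would not apply. The remaining parts are routine density and continuity arguments, essentially transcribed from \cite[Theorem 3.7]{KLMR}.
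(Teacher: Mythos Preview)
Your proposal is correct and follows precisely the route the paper intends: the paper itself gives no explicit argument for this proposition but simply refers to \cite[Theorem~3.7]{KLMR}, and you have faithfully reproduced that strategy (Fatou passage $p\to 1^+$ for the $L^1$-contraction estimate, density extension, Scheff\'e for strong continuity on $C_c^\infty$, and the Bochner-integral identity for the generator). The only point worth flagging is cosmetic: Scheff\'e's theorem in the form you invoke (a.e.\ convergence plus convergence of $L^1$-norms implies $L^1$-convergence) needs the vector-valued version, which follows from the scalar case applied to $|\tilde S_2(t_k)u|$ together with the inequality $|\tilde S_2(t_k)u-u|\le |\tilde S_2(t_k)u|+|u|$ and the generalized dominated convergence theorem; you may want to make this explicit.
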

  As a consequence of the consistency of the semigroups $\{\tilde{S}_p(t)\}_{t\geq 0}$, $1<p<\infty$, we drop the index $p$ and use $\{\tilde{S}(t)\}_{t\geq 0}$ to indicate our semigroup on $L^p(\R^d,\C^m)$.
   \begin{prop}\label{Prop Lp-Lq estimate}
    Let $1\leq p<q\leq\infty$. Then, for all $t>0$, $\tilde{S}(t)$ maps $L^{p}(\R^d,\C^m)$ into $L^{q}(\R^d,\C^m)$ and there exists a positive constant $\tilde{M}$ such that
   \begin{equation}
   \|\tilde{S}(t)f\|_{L^q(\R^d,\C^m)}\leq \tilde{M}t^{-\frac{d}{2}(\frac{1}{p}-\frac{1}{q})}\|f\|_{L^p(\R^d,\C^m)},\qquad t>0, \; f\in L^p(\R^d,\C^m).
   \end{equation}
   In particular, $\{\tilde{S}(t)\}_{t\geq 0}$ is ultracontractive, i.e.
   \begin{equation}\label{ultracontractivity ineq.}
   \|\tilde{S}(t)f\|_{\infty}\leq \tilde{M} t^{-\frac{d}{2}}\|f\|_{L^1(\R^d,\C^m)}
   \end{equation}
   for every $t>0$ and $f\in L^1(\R^d,\C^m)$.
   \end{prop}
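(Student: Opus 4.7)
My plan is to follow the domination strategy of \cite[Section~4.2]{KLMR} and then transfer the resulting bound from $\{S(t)\}$ to $\{\tilde{S}(t)\}$ via the Trotter–Kato product formula already used in Proposition~\ref{prop. positivity+consistence}. The starting point will be a Kato-type pointwise inequality: for every $f\in L^p(\R^d,\C^m)$,
\[
|S(t)f(x)|\;\le\;e^{\beta t}\,T_0(t)|f|(x)\qquad\text{a.e.,}
\]
where $T_0(t)$ denotes the scalar symmetric semigroup generated by $\mathrm{div}(Q\nabla\cdot)$ on $L^p(\R^d)$ and $\beta<0$ is the constant in \eqref{diss of V}. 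This is proved as in \cite{KLMR} by differentiating $(|S(t)f|^2+\delta)^{1/2}$, using the matrix dissipativity to absorb the potential term, comparing with the scalar parabolic equation, and letting $\delta\to 0$.

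Given this pointwise estimate, the perturbation by the scalar potential $v\ge 0$ is handled as follows. Since multiplication by $e^{-tv/n}$ is a pointwise positive contraction and commutes with $|\cdot|$, an induction on the Trotter iterates yields
\[
\bigl|(e^{-tv/n}S(t/n))^{n}f(x)\bigr|\;\le\;e^{\beta t}\bigl(e^{-tv/n}T_0(t/n)\bigr)^{n}|f|(x).
\]
Passing to the limit $n\to\infty$ in the strong topology of $L^p$, invoking Proposition~\ref{prop. positivity+consistence}(1) on the left and the scalar Trotter–Kato formula on the right, I obtain
\[
|\tilde{S}(t)f(x)|\;\le\;e^{\beta t}\tilde{T}_0(t)|f|(x),
\]
where $\tilde{T}_0(t)$ is the scalar symmetric semigroup generated by $\mathrm{div}(Q\nabla\cdot)-v$ on $L^p(\R^d)$.

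Since $v\ge 0$, the scalar semigroup $\tilde{T}_0$ is dominated pointwise by $T_0$, which by uniform ellipticity of $Q$ satisfies classical Aronson Gaussian upper bounds and therefore the standard $L^p$–$L^q$ estimate
\[
\|T_0(t)g\|_q\;\le\;M\,t^{-\frac{d}{2}\left(\frac{1}{p}-\frac{1}{q}\right)}\|g\|_p,\qquad 1\le p<q\le\infty.
\]
Combining this with the pointwise bound and using $\|\,|f|\,\|_{L^p(\R^d)}=\|f\|_{L^p(\R^d,\C^m)}$ gives
\[
\|\tilde{S}(t)f\|_{L^q(\R^d,\C^m)}\;\le\;e^{\beta t}M\,t^{-\frac{d}{2}\left(\frac{1}{p}-\frac{1}{q}\right)}\|f\|_{L^p(\R^d,\C^m)},
\]
from which the first assertion follows with $\tilde M:=M\sup_{t>0}e^{\beta t}=M$ (recall $\beta<0$). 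Taking $p=1$, $q=\infty$ yields the ultracontractivity estimate \eqref{ultracontractivity ineq.}.

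The main obstacle is the Kato-type pointwise inequality $|S(t)f|\le e^{\beta t}T_0(t)|f|$ underlying Step~1; this is the step that genuinely uses the matrix structure and the dissipativity hypothesis \eqref{diss of V}, and must be carried out by a careful regularization of $|S(t)f|$ together with integration-by-parts, as in \cite[Section~4.2]{KLMR}. Once that is in hand, the remaining steps are routine manipulations based on the Trotter–Kato formula and the well-known behaviour of the scalar heat-type semigroup $T_0$.
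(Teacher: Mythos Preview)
Your proposal is correct and follows the approach the paper points to: the paper does not spell out a proof but simply refers to \cite[Theorem~3.7 and Section~4.2]{KLMR}, and you carry out exactly that domination-by-the-scalar-heat-semigroup strategy, adapting it to $\tilde{S}$ via the Trotter--Kato formula already used in Proposition~\ref{prop. positivity+consistence}. One small simplification: since $\tilde{V}=V-vI_m$ still satisfies \eqref{diss of V} with the same constant $\beta$ (indeed $\Re\langle\tilde{V}\xi,\xi\rangle\le\beta|\xi|^2-v|\xi|^2\le\beta|\xi|^2$), the Kato-type pointwise inequality $|\tilde{S}(t)f|\le e^{\beta t}T_0(t)|f|$ can be obtained directly for $\tilde{S}$ by the argument of \cite[Section~4.2]{KLMR}, bypassing the Trotter--Kato iteration altogether; this is presumably what the paper has in mind, but your two-step route is equally valid.
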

\subsection{Gaussian estimates}
The immediate consequence of $(\ref{ultracontractivity ineq.})$ is that $\{\tilde{S}(t)\}_{t\geq 0}$ is given by a matrix kernel.
\begin{coro}\label{exist of kernel}
For all $t>0$ there exists $\tilde{K}(t,\cdot ,\cdot)=(\tilde{k}_{ij}(t,x,y))_{1\leq i,j\leq m}\in L^\infty(\R^d\times\R^d,\R^{m\times m})$ such that, for all $1<p<\infty$,
\begin{equation}\label{kernel representation}
\tilde{S}(t)f(x)=\int_{\R^d}\tilde{K}(t,x,y) f(y)dy,\quad f\in L^p(\R^d,\C^m).
\end{equation}
 Moreover,
\begin{equation}\label{boundedness of kernel}
 |\tilde{k}_{ij}(t,x,y)|\leq \tilde{M}t^{-\frac{d}{2}},\qquad t>0,\, x,y\in\R^d,\,1\leq i,j\leq m ,
\end{equation}
and, for any $t>0$, $\tilde{S}(t)$ is positive if, and only if, $\tilde{k}_{ij}(t,x,y)\geq 0$ for almost every $x,y\in\R^d$ and all $t>0$.
\end{coro}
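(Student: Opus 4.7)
The plan is to extract the kernel representation directly from the ultracontractivity estimate \eqref{ultracontractivity ineq.} by a componentwise application of the Dunford--Pettis theorem. Concretely, I would first observe that, by Proposition \ref{Prop Lp-Lq estimate}, for every fixed $t>0$ the operator $\tilde{S}(t)$ extends to a bounded linear map $L^1(\R^d,\C^m)\to L^\infty(\R^d,\C^m)$ with norm at most $\tilde{M}t^{-d/2}$. Denoting by $(e_1,\ldots,e_m)$ the canonical basis of $\C^m$, and by $\pi_i:L^\infty(\R^d,\C^m)\to L^\infty(\R^d)$ the projection onto the $i$-th coordinate, the scalar operator
\[
T^t_{ij}:L^1(\R^d)\to L^\infty(\R^d),\qquad T^t_{ij}g:=\pi_i\bigl(\tilde{S}(t)(g\,e_j)\bigr)
\]
is linear and bounded with $\|T^t_{ij}\|\le \tilde{M}t^{-d/2}$.

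Next I would invoke the classical Dunford--Pettis theorem, which characterises bounded operators from $L^1$ into $L^\infty$ as integral operators with an $L^\infty$ kernel of the same norm. This yields functions $\tilde{k}_{ij}(t,\cdot,\cdot)\in L^\infty(\R^d\times\R^d)$ with $\|\tilde{k}_{ij}(t,\cdot,\cdot)\|_\infty\le \tilde{M}t^{-d/2}$ such that
\[
(T^t_{ij}g)(x)=\int_{\R^d}\tilde{k}_{ij}(t,x,y)g(y)\,dy\qquad\text{for a.e.\ }x\in\R^d,
\]
which gives \eqref{boundedness of kernel} componentwise. Reassembling the components and using linearity of $\tilde{S}(t)$ in $f=\sum_j f_j e_j$, the matrix kernel $\tilde{K}(t,x,y):=(\tilde{k}_{ij}(t,x,y))_{ij}$ satisfies \eqref{kernel representation} for every $f\in L^1(\R^d,\C^m)$. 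The extension to $1<p<\infty$ follows from the consistency of the semigroups stated in Proposition \ref{L_1 result}, since for $f\in L^p\cap L^1$ both sides coincide and both defining operators are bounded.

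Finally, for the equivalence characterising positivity, if $\tilde{k}_{ij}(t,\cdot,\cdot)\ge 0$ a.e.\ for every $i,j$, then \eqref{kernel representation} immediately gives $\tilde{S}(t)f\ge 0$ for every $0\le f\in L^p$. Conversely, assuming $\tilde{S}(t)$ positive, one tests against $f=g\,e_j$ for arbitrary $0\le g\in C_c(\R^d)$ and reads off $\int \tilde{k}_{ij}(t,x,y)g(y)\,dy\ge 0$ for a.e.\ $x$; varying $g$ over a countable dense family in $L^1_+$ forces $\tilde{k}_{ij}(t,\cdot,\cdot)\ge 0$ a.e. No genuine obstacle arises here; the only point to treat carefully is the measurable selection implicit in Dunford--Pettis when one wants the kernel jointly measurable in $(x,y)$, which is standard in the Borel setting of $\R^d\times\R^d$.
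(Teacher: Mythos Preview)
Your proposal is correct and follows essentially the same approach as the paper: the paper simply states that the existence of the kernel and \eqref{boundedness of kernel} are consequences of the ultracontractivity estimate \eqref{ultracontractivity ineq.}, which is precisely the Dunford--Pettis argument you spell out componentwise. For the positivity characterisation the paper argues in the same spirit, testing $\tilde{S}(t)$ against $\chi_B e_i$ for arbitrary bounded measurable sets $B$ rather than against $g\,e_j$ with $g\in C_c(\R^d)$, but the two variants are equivalent.
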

\begin{proof}
 The existence of the kernel $\tilde{K}$ and \eqref{boundedness of kernel} are consequences of \eqref{ultracontractivity ineq.}. \\
 On the other hand, it is obvious that the positivity of all entries of the kernel matrix $\tilde{K}$ is a sufficient condition for positivity. Conversely, let $t>0$, $i,j\in\{1,\dots,m\}$ and $B$ any bounded measurable set of $\R^d$. Then
\begin{align*}
\tilde{S}(t)\geq 0 &\Longrightarrow \langle \tilde{S}(t)(\chi_B e_i),e_j\rangle \geq 0\\
&\Longrightarrow \int_B \tilde{k}_{ij}(t,x,y)dy \geq 0.
\end{align*}
As $B$ is arbitrary chosen, one gets $\tilde{k}_{ij}(t,x,y)\ge 0$ for a.e. $x,y\in\R^d$ and all $t>0$.
\end{proof}
We give now a Gaussian upper bound estimate for $\{\tilde{S}(t)\}_{t\geq 0}$. For the proof, we follow the strategy of \cite[Chapter 6]{Ouhabaz}
\begin{thm}\label{Thm Gauss ker est}
Assume that Hypotheses \ref{Hyp. of KLMR} and $0\le v\in W^{1,\infty}_{loc}(\R^d)$ satisfying \eqref{grad v< v}. Then there exist positive constants $C_1$ and $C_2$ such that
\begin{equation}\label{Gauss. ker. estimate}
|\tilde{k}_{ij}(t,x,y)|\leq C_1 t^{-\frac{d}{2}}\exp\{-C_2\dfrac{|x-y|^2}{4t}\}
\end{equation}
 for all $i,j\in\{1,\dots,m\},\,t>0$ and $ x,y\in\R^d$.
\end{thm}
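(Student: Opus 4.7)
The plan is to follow the Davies perturbation/Gaffney-type method from \cite[Chapter 6]{Ouhabaz}, adapted to the matrix setting. Fix $\rho\in\R$ and a bounded Lipschitz function $\psi:\R^d\to\R$ with $\|\nabla\psi\|_\infty\le 1$, and introduce the twisted semigroup $\tilde S_\rho(t):=e^{\rho\psi}\tilde S(t)e^{-\rho\psi}$. Since $e^{\pm\rho\psi}$ are bounded multipliers, $\tilde S_\rho(t)$ is a $C_0$-semigroup on $L^p(\R^d,\C^m)$ with generator $\tilde L_{p,\rho}:=e^{\rho\psi}\tilde L_p e^{-\rho\psi}$. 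For $u\in C_c^\infty(\R^d,\C^m)$ a direct computation gives
\[
\tilde L_{p,\rho}u=div(Q\nabla u)-\rho\la\nabla\psi,Q\nabla u\ra-\rho\,div(uQ\nabla\psi)+\rho^2\la Q\nabla\psi,\nabla\psi\ra u+\tilde V u.
\]

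The heart of the argument is a weighted $L^2$ quadratic estimate. Taking $p=2$, I would compute $\Re\la-\tilde L_{2,\rho}u,u\ra_{L^2}$ on the core $C_c^\infty(\R^d,\C^m)$ (a core by the same reasoning as in Proposition \ref{Coincidence of domains}): integrate by parts on the diffusion term to obtain $\sum_j\int\la Q\nabla u_j,\nabla u_j\ra\ge\eta_1\|\nabla u\|_{L^2}^2$; apply Cauchy--Schwarz and Young's inequality with a small constant to the two cross terms in $\rho\nabla\psi$ so as to absorb half of the principal gradient term; bound $\la Q\nabla\psi,\nabla\psi\ra\le\eta_2$ on the zero-order $\rho^2$ term; and use \eqref{diss of V} with $v\ge 0$ to get $\Re\la-\tilde V(x)\xi,\xi\ra=-\Re\la V(x)\xi,\xi\ra+v(x)|\xi|^2\ge-\beta|\xi|^2\ge 0$. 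The outcome is
\[
\Re\la-\tilde L_{2,\rho}u,u\ra_{L^2}\ge\frac{\eta_1}{2}\|\nabla u\|_{L^2}^2-c_1\rho^2\|u\|_{L^2}^2
\]
for some $c_1=c_1(\eta_1,\eta_2)>0$, which yields $\|\tilde S_\rho(t)\|_{L^2\to L^2}\le e^{c_1\rho^2 t}$.

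To upgrade this to an $L^1\to L^\infty$ bound with the same exponential factor, I would combine the $L^2$ estimate with the ultracontractivity of $\tilde S(t)$ from Proposition \ref{Prop Lp-Lq estimate} via the semigroup decomposition $\tilde S_\rho(t)=\tilde S_\rho(t/2)\tilde S_\rho(t/2)$, together with a duality argument based on the observation that the adjoint twisted semigroup $\tilde S_\rho(t)^*$ satisfies the same $L^2$ estimate with $\psi$ replaced by $-\psi$ (here Remark \ref{adjoint-lp} is convenient). This gives
\[
\|\tilde S_\rho(t)\|_{L^1\to L^\infty}\le \tilde M\,t^{-d/2}e^{c_1\rho^2 t}.
\]
Since $\tilde S_\rho(t)$ admits the kernel $e^{\rho\psi(x)}\tilde K(t,x,y)e^{-\rho\psi(y)}$, by Corollary \ref{exist of kernel} one obtains, componentwise,
\[
|\tilde k_{ij}(t,x,y)|\le \tilde M\,t^{-d/2}\exp\bigl(c_1\rho^2 t+\rho(\psi(y)-\psi(x))\bigr).
\]
Finally, for fixed $x,y\in\R^d$, I would take a sequence of bounded truncations of $z\mapsto-|z-y|$ (each $1$-Lipschitz) with $\psi(y)-\psi(x)\to-|x-y|$, and then optimize in $\rho=|x-y|/(2c_1 t)$ to conclude \eqref{Gauss. ker. estimate} with $C_1=\tilde M$ and $C_2=1/c_1$.

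The main obstacle will be legitimizing the weighted $L^2$ form computation and preserving the $e^{c\rho^2 t}$ factor in the passage to the $L^1\to L^\infty$ bound: the computation on $C_c^\infty(\R^d,\C^m)$ must extend to $D(\tilde L_{2,\rho})$, which requires that $C_c^\infty$ be a core for $\tilde L_{2,\rho}$ (this follows from Proposition \ref{Coincidence of domains} applied to the shifted operator, since the twist only introduces bounded-coefficient lower order terms), and the ultracontractivity step is delicate for matrix non-self-adjoint semigroups, needing either a Nash-type argument applied directly to $\tilde S_\rho$ or a careful Riesz--Thorin interpolation combined with duality.
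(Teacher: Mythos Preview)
Your overall strategy---Davies' twisted semigroup $\tilde S_\rho(t)=e^{\rho\psi}\tilde S(t)e^{-\rho\psi}$, an $L^2$ form estimate yielding $\|\tilde S_\rho(t)\|_{2\to2}\le e^{c\rho^2 t}$, then passage to $L^1\to L^\infty$ and optimisation in $\rho$---is exactly the one the paper follows. The $L^2$ quadratic estimate you outline is correct (the paper in fact gets the full $\eta_1\|\nabla u\|_2^2$ without losing half of it, because the two cross terms in $\rho\nabla\psi$ combine to a total derivative after integration by parts, but this only affects constants).

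The genuine gap is in your third paragraph. You propose to reach $\|\tilde S_\rho(t)\|_{1\to\infty}\le \tilde M t^{-d/2}e^{c_1\rho^2 t}$ by ``combining the $L^2$ estimate with the ultracontractivity of $\tilde S(t)$ from Proposition \ref{Prop Lp-Lq estimate}''. That does not work: the ultracontractivity in Proposition \ref{Prop Lp-Lq estimate} is for the \emph{untwisted} semigroup, and transferring it to $\tilde S_\rho(t)$ via the bounded multipliers $e^{\pm\rho\psi}$ introduces a factor $e^{2|\rho|\|\psi\|_\infty}$. Since at the end you let $\psi$ run through truncations of $z\mapsto -|z-y|$ with $\|\psi\|_\infty\to\infty$, the resulting bound blows up and no Gaussian off-diagonal decay survives. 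Neither the splitting $\tilde S_\rho(t)=\tilde S_\rho(t/2)\tilde S_\rho(t/2)$ nor duality helps here, because both halves still need a $\psi$-independent $L^1\to L^2$ (resp.\ $L^2\to L^\infty$) bound for the \emph{twisted} semigroup, and that is precisely what is missing.

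The paper fills this gap as follows. From the twisted form bound one has $-\la(\tilde L_{2,\rho}-\omega)f,f\ra\ge \eta_1\|\nabla f\|_2^2$ with $\omega=\eta_2\rho^2$. Two further ingredients are then needed. First, one shows that $e^{-\omega t}\tilde S_\rho(t)$ is an $L^1$-contraction: the drift-plus-zero-order part $\tilde L_{2,\rho}-\omega-\tilde V$ has bounded coefficients and an accretive $L^2$-form, hence generates an $L^1$-contraction semigroup, and then Trotter--Kato with the potential $\tilde V$ gives the $L^1$-contractivity of the full twisted semigroup. Second, one applies Nash's inequality \emph{directly to} $e^{-\omega t}\tilde S_\rho(t)$: differentiating $t\mapsto\|e^{-\omega t}\tilde S_\rho(t)f\|_2^{-4/d}$ and using the form bound together with Nash and the $L^1$-contractivity yields $\|\tilde S_\rho(t)\|_{1\to 2}\lesssim t^{-d/4}e^{\omega t}$, with constants depending only on $\eta_1,\eta_2,d$ and not on $\|\psi\|_\infty$. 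Duality (via Remark \ref{adjoint-lp}) then gives the matching $L^2\to L^\infty$ bound, and the two combine to the desired $L^1\to L^\infty$ estimate. You anticipated this in your closing caveat (``a Nash-type argument applied directly to $\tilde S_\rho$''); that alternative is not optional but is in fact the missing piece, and you should replace your third paragraph by it.
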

\begin{proof}
 Let $\lambda\in\R$ and $\varphi\in E:=\{\psi\in C_b^\infty(\R^d) : \|\nabla\psi\|_\infty\leq 1\}$. Define the semigroup $\{\tilde{S}_{\lambda,\varphi}(t)\}_{t\geq 0}$ by
\[  \tilde{S}_{\lambda,\varphi}(t)f=e^{-\lambda\varphi}\tilde{S}(t)(e^{\lambda\varphi}f)\]
 for all $f\in L^p(\R^d,\C^m)$ with $1\leq p<\infty$. One has
\begin{equation}\label{kernel of twisted semgpe}
\tilde{S}_{\lambda,\varphi}(t)f=\int_{\R^d}e^{-\lambda(\varphi(x)-\varphi(y))}\tilde{K}(t,x,y) f(y)dy.
\end{equation}
Denote by $\tilde{L}_{\lambda,\varphi}$ the generator of $\{\tilde{S}_{\lambda,\varphi}(t)\}_{t\geq 0}$ on $L^2(\R^d,\C^m)$ and let $f\in C_c^\infty(\R^d,\R^m)$. A straightforward calculation yields
\[ \tilde{L}_{\lambda,\varphi} f=div(Q\nabla f)+2\lambda\langle Q\nabla\varphi,\nabla f\rangle+(\tilde{V}+\lambda div(Q\nabla\varphi)+\lambda^2 |\nabla\varphi|_Q^2)f. \]
Moreover,
\[ \langle-\tilde{L}_{\lambda,\varphi}f,f\rangle=-\langle(\tilde{L}_{\lambda,\varphi}-\tilde{V})f,f\rangle-\langle \tilde{V} f,f\rangle\geq -\langle(\tilde{L}_{\lambda,\varphi}-\tilde{V})f,f\rangle. \]
Integrating by parts, one obtains
\begin{align*}
-\langle(\tilde{L}_{\lambda,\varphi}-\tilde{V})f,f\rangle&= \sum_{i=1}^m\int_{\R^d}\langle Q(x)\nabla f_i(x),\nabla f_i(x)\rangle dx-2\lambda\sum_{i=1}^{m}\int_{\R^d}\langle Q(x)\nabla \varphi(x),\nabla f_i(x)\rangle f_i(x) dx\\
&\quad -\int_{\R^d}\{\lambda div(Q\nabla\varphi)(x)+\lambda^2|\nabla\varphi(x)|_{Q(x)}^2 \} |f(x)|^2 dx\\
&= \sum_{i=1}^m\int_{\R^d}\langle Q(x)\nabla f_i(x),\nabla f_i(x)\rangle dx-\lambda^2\int_{\R^d}\langle Q(x)\nabla \varphi(x),\nabla \varphi(x)\rangle |f(x)|^2 dx\\
&\geq \eta_1 \|\nabla f\|_2^2 -\eta_2 \lambda^2 \|f\|_2^2.
\end{align*}
If we set $\omega=\eta_2 \lambda^2$, then
\[ -\langle(\tilde{L}_{\lambda,\varphi}-\omega)f,f\rangle\geq \eta_1 \|\nabla f\|_2^2. \]
Consider now the function $\gamma(t)=\|e^{-\omega t}\tilde{S}_{\lambda,\varphi}(t)f\|_2^{-\frac{4}{d}}$ for all $t\geq 0$. So, one has
\begin{align*}
\gamma'(t)&= \frac{d}{dt}(\|e^{-\omega t}\tilde{S}_{\lambda,\varphi}(t)f\|_2^2)^{-\frac{2}{d}}\\
&=-\frac{4}{d}\|e^{-\omega t}\tilde{S}_{\lambda,\varphi}(t)f\|_2^{-\frac{4}{d}-2}\langle(\tilde{L}_{\lambda,\varphi}-\omega)e^{-\omega t}\tilde{S}_{\lambda,\varphi}(t)f,e^{-\omega t}\tilde{S}_{\lambda,\varphi}(t)f\rangle\\
&\geq \frac{4\eta_1}{d}\|e^{-\omega t}\tilde{S}_{\lambda,\varphi}(t)f\|_2^{-\frac{4}{d}-2}\|\nabla(e^{-\omega t}\tilde{S}_{\lambda,\varphi}(t)f)\|_2^2.
\end{align*}
By Nash's inequality, cf. \cite[Theorem 2.4.6]{Davies},
one obtains
\[ \gamma'(t)\geq \frac{4\eta_1}{d\,C}\|e^{-\omega t}\tilde{S}_{\lambda,\varphi}(t)f\|_1^{-\frac{4}{d}}. \]
for some $C>0$. Since $A_{\lambda,\varphi}:=\tilde{L}_{\lambda,\varphi}-\omega-\tilde{V}$ is an elliptic operator with bounded coefficients and its associated form in $L^2(\R^d,\C^m)$ is accretive then $A_{\lambda,\varphi}$ generates a contractive semigroup in $L^1(\R^d,\C^m)$. Applying the Trotter-Kato product formula one obtains
$\|e^{-\omega t}\tilde{S}_{\lambda,\varphi}(t)f\|_1\leq \|f\|_1$. So, it follows that
\[\gamma(t)\geq \int_{0}^{t}\gamma'(s)ds\geq \frac{2\eta_1}{d\,C} t\|f\|_1^{-\frac{4}{d}}. \]
Hence,
 \begin{equation}\label{L^1-L^2 estimate}
 \|\tilde{S}_{\lambda,\varphi}(t)f\|_2\leq \left(\frac{4\eta_1}{d\,C}\right)^{-\frac{d}{4}}e^{\omega t} t^{-\frac{d}{4}}\|f\|_1.
 \end{equation}
 Since $V^*$ verifies the same hypotheses as $V$ (taking in consideration Remark \ref{adjoint-lp}), one obtains, by similar arguments as above,
 \[ \|\tilde{S}^*_{\lambda,\varphi}(t)f\|_2\leq \left(\frac{4\eta_1}{d\, C}\right)^{-\frac{d}{4}}e^{\omega t} t^{-\frac{d}{4}}\|f\|_1. \]
 Now, let $g\in C_c^\infty(\R^d,\C^m)$. One has
 \begin{align*}
 \left|\int_{\R^d}\langle\tilde{S}_{\lambda,\varphi}(t)f(x),g(x)\rangle dx\right| &=\left|\langle \tilde{S}_{\lambda,\varphi}(t)f,g\rangle_{L^2}\right| \\
 &=\left|\langle f,\tilde{S}^*_{\lambda,\varphi}(t)g\rangle_{L^2}\right| \\
 &\le\|\tilde{S}^*_{\lambda,\varphi}(t)g\|_2 \|f\|_2 \\
 &\le \left(\frac{4\eta_1}{d\, C}\right)^{-\frac{d}{4}}e^{\omega t} t^{-\frac{d}{4}}\|f\|_2\|g\|_1.
 \end{align*}
 Therefore,
 $\tilde{S}_{\lambda,\varphi}(t)f\in L^\infty(\R^d,\C^m)$ and
 \begin{equation}\label{L2-L8 estimate}
 \|\tilde{S}_{\lambda,\varphi}(t)f\|_\infty\leq \left(\frac{4\eta_1}{d\, C}\right)^{-\frac{d}{4}}e^{\omega t} t^{-\frac{d}{4}}\|f\|_2.
 \end{equation}
 Combining $(\ref{L^1-L^2 estimate})$ and $(\ref{L2-L8 estimate})$ one obtains $\tilde{S}_{\lambda,\varphi}(t)f\in L^\infty(\R^d,\C^m)$ for every $f\in L^1(\R^d,\C^m)$ and
 \begin{align*}
 \|\tilde{S}_{\lambda,\varphi}(t)f\|_\infty =\|\tilde{S}_{\lambda,\varphi}(t/2)\tilde{S}_{\lambda,\varphi}(t/2)f\|_\infty&\le
 \left(\frac{4\eta_1}{d\, C}\right)^{-\frac{d}{4}}e^{\omega t/2} (t/2)^{-\frac{d}{4}}\|S_{\lambda,\varphi}(t/2)f\|_2\\
 &\le C_1 e^{\omega t} t^{-\frac{d}{2}}\|f\|_1,
 \end{align*}
 with $C_1=2^{d/2}\left(\frac{4\eta_1}{d\, C}\right)^{-\frac{d}{2}} $.\\
 Arguing similarly as in Corollary $\ref{exist of kernel}$ and taking into account \eqref{kernel of twisted semgpe} one gets
 \[ |\tilde{k}_{ij}(t,x,y)|\leq C_1 t^{-\frac{d}{2}}\exp\{\eta_2\lambda^2 t+\lambda(\varphi(x)-\varphi(y))\},\quad t>0,\,x,y\in \R^d. \]
 Thanks to the arbitrariness of $\lambda$ one can choose  $\lambda=\dfrac{\varphi(y)-\varphi(x)}{2\eta_2 t}$ and obtains
 \[ |\tilde{k}_{ij}(t,x,y)|\leq C_1 t^{-\frac{d}{2}}\exp\{-\dfrac{|\varphi(x)-\varphi(y)|^2}{4\eta_2 t}\}. \]
 Note that the distance $\delta$ on $\R^d$ defined by
 \[ \delta(x,y)=sup\{\psi(x)-\psi(y) : \psi\in E\},\qquad x,y\in\R^d, \]
  is equivalent to the euclidian distance in $\R^d$. Therefore, there exists $C_2>0$ such that
 \[ |\tilde{k}_{ij}(t,x,y)|\leq C_1 t^{-\frac{d}{2}}\exp\{-C_2\dfrac{|x-y|^2}{4t}\}. \]
 \end{proof}
 The following example shows how scalar Schr\"odinger operators can be seen as Schr\"odinger systems with real matrix potentials.
 \begin{examp}\label{exa-complexe}
 Let us consider the matrix potential
 $$\tilde{V}(x):=\begin{pmatrix}
-v(x) & -w(x)\\
w(x) & -v(x)
\end{pmatrix}=w(x)\begin{pmatrix}
0 & -1\\
1 & 0
\end{pmatrix}-v(x)\begin{pmatrix}
1 & 0\\
0 & 1
\end{pmatrix},$$
where $w(x)=1+|x|^r$ and $v(x)=1+|x|^\alpha,\,x\in \R^d$, with $r\in [1,2)$ and $\alpha \ge 1$. Taking into account Remark \ref{V-dissip} we deduce,
by Corollary \ref{coro gen. of s.g.}, that the operator
$$\tilde{L}_p=\begin{pmatrix}
\Delta & 0\\
0 & \Delta
\end{pmatrix}+\tilde{V}\quad \hbox{\ with domain }W^{2,p}(\R^d,\C^2)\cap D(|x|^r)\cap D(|x|^\alpha)$$
generates a $C_0$-semigroup on $L^p(\R^d,\C^2)$. Applying Corollary \ref{exist of kernel} and Theorem \ref{Thm Gauss ker est} we know that this semigroup is a given by a kernel satisfying Gaussian estimates.\\
Now, we diagonalize the matrix $\begin{pmatrix}
0 & -1\\
1 & 0
\end{pmatrix}$ and so we obtain that $\tilde{L}_p$ is similar to the operator
$$P^{-1}\tilde{L}_pP=\begin{pmatrix}
\Delta & 0\\
0 & \Delta
\end{pmatrix}+\begin{pmatrix}
i(1+|x|^r)-(1+|x|^\alpha) & 0\\
0 & -i(1+|x|^r)-(1+|x|^\alpha)
\end{pmatrix},$$
where $P=\begin{pmatrix}
1 & 1\\
-i & i
\end{pmatrix}$.
Thus the following Schr\"odinger operators with complex potentials $\Delta \pm i(1+|x|^r)-(1+|x|^\alpha)$ with domain $W^{2,p}(\R^d)\cap D(|x|^\alpha+i|x|^r)$ generates $C_0$-semigroups on $L^p(\R^d)$ and satisfy Gaussian estimates.
\end{examp}
 \subsection{Further kernel estimates}
In this subsection we compare the off-diagonal entries $\tilde{k}_{ij},\,i\neq j$, with $\tilde{k}_{ii}$ for all $i,j\in \{1,\ldots m\}$, and deduce more precise kernel estimates with respect to space variables in the symmetric case.

Here, in addition to Hypotheses \ref{Hyp. of KLMR}, we assume the following
\begin{hyp}\label{Hyp. of OURH}
$V$ is symmetric and $v(x)=1+|x|^\alpha,\,x\in \R^d$, with $\alpha \ge 1$.
\end{hyp}
Let us start with the following comparison result.
\begin{lm}
The kernel $\tilde{K}$ of $\{\tilde{S}(t)\}_{t\ge 0}$ satisfies
\begin{equation}\label{dia-nondia}
|\tilde{k}_{ij}(t,x,y)+\tilde{k}_{ij}(t,y,x)|\le 2\left(\tilde{k}_{ii}(t,x,y)\tilde{k}_{jj}(t,x,y)\right)^{\frac{1}{2}}
\end{equation}
for all $i,j\in \{1,\ldots ,m\},\,t>0$ and a.e. $x,y\in \R^d$.
\end{lm}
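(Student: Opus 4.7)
The plan is to exploit the self-adjointness of $\tilde S(t)$ on $L^2(\R^d,\R^m)$. Under Hypotheses \ref{Hyp. of OURH}, $V$ is symmetric and $v$ is scalar, so the realization $\tilde L_2$ of $\tilde{\mathcal L}$ in $L^2$ is self-adjoint, whence $\tilde S(t)=\tilde S(t/2)^*\tilde S(t/2)=\tilde S(t/2)^2$. At the kernel level self-adjointness reads $\tilde k_{kj}(t/2,z,y)=\tilde k_{jk}(t/2,y,z)$, and combined with the Chapman--Kolmogorov identity it gives the factorization
\[
\tilde k_{ij}(t,x,y)=\sum_{k=1}^m\int_{\R^d}\tilde k_{ik}(t/2,x,z)\,\tilde k_{jk}(t/2,y,z)\,dz.
\]

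First, for each $\alpha\in\{1,\dots,m\}$ and $w\in\R^d$, I would introduce the vector-valued functions $R_\alpha^w:\R^d\to\R^m$ by $R_\alpha^w(z):=(\tilde k_{\alpha k}(t/2,w,z))_{k=1}^m$. These belong to $L^2(\R^d,\R^m)$ thanks to the $L^p$--$L^q$ estimates of Proposition \ref{Prop Lp-Lq estimate} and the Gaussian bounds of Theorem \ref{Thm Gauss ker est}, so that the factorization above recasts every kernel entry as an $L^2$ inner product,
\[
\tilde k_{\alpha\beta}(t,x,y)=\langle R_\alpha^x,R_\beta^y\rangle_{L^2(\R^d,\R^m)}.
\]
Another use of self-adjointness, $\tilde k_{ij}(t,y,x)=\tilde k_{ji}(t,x,y)=\langle R_j^x,R_i^y\rangle$, then turns the left-hand side of \eqref{dia-nondia} into $|\langle R_i^x,R_j^y\rangle+\langle R_j^x,R_i^y\rangle|$ and the right-hand side into $2\sqrt{\langle R_i^x,R_i^y\rangle\langle R_j^x,R_j^y\rangle}$.

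The remaining task is the Cauchy--Schwarz-type bound
\[
\bigl|\langle R_i^x,R_j^y\rangle+\langle R_j^x,R_i^y\rangle\bigr|^{2}\;\le\;4\,\langle R_i^x,R_i^y\rangle\,\langle R_j^x,R_j^y\rangle,
\]
which I would reduce to a discriminant condition by introducing, for $s\in\R$, the real quadratic polynomial
\[
P(s):=\langle R_i^x+sR_j^x,\,R_i^y+sR_j^y\rangle=\tilde k_{ii}(t,x,y)+s\bigl(\tilde k_{ij}(t,x,y)+\tilde k_{ij}(t,y,x)\bigr)+s^2\tilde k_{jj}(t,x,y).
\]
If $P(s)\ge 0$ for every $s\in\R$, the discriminant condition is exactly \eqref{dia-nondia}. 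To establish the non-negativity I set $\xi_s:=e_i+se_j\in\R^m$ and observe that $(R_i^x+sR_j^x)(z)=\tilde K(t/2,x,z)^{\top}\xi_s$, so
\[
P(s)=\int_{\R^d}\bigl\langle\tilde K(t/2,x,z)^{\top}\xi_s,\,\tilde K(t/2,y,z)^{\top}\xi_s\bigr\rangle\,dz,
\]
which should then be handled by the $L^2(\R^d,\R^m)$ Cauchy--Schwarz inequality combined with the diagonal identity $\int\tilde K(t/2,w,z)\tilde K(t/2,w,z)^{\top}\,dz=\tilde K(t,w,w)$ that comes from $\tilde S(t)=\tilde S(t/2)^2$.

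The step I expect to be the main obstacle is precisely this last one: a straightforward Cauchy--Schwarz yields only $|P(s)|^{2}\le\langle\tilde K(t,x,x)\xi_s,\xi_s\rangle\langle\tilde K(t,y,y)\xi_s,\xi_s\rangle$, i.e.\ a bound on $|P(s)|$ in terms of on-diagonal kernel values, while what is truly needed is the sharper sign statement $P(s)\ge 0$. Extracting it forces one to use more than just the self-adjointness of $\tilde S(t)$: I would look for an argument that leverages the factorization $\tilde S(t)=\tilde S(t/2)^{2}\ge 0$ pointwise on the two-dimensional subspace $\mathrm{span}(e_i,e_j)\subset\R^m$, perhaps by passing through a suitable scalarization of the semigroup on this subspace before applying Cauchy--Schwarz, so that the bound on $P(s)$ is forced to be one-sided rather than symmetric.
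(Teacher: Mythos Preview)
Your reduction is correct but more elaborate than needed: once you use self-adjointness to get $\tilde k_{ij}(t,y,x)=\tilde k_{ji}(t,x,y)$, your polynomial is simply $P(s)=\langle\tilde K(t,x,y)\xi_s,\xi_s\rangle$ with $\xi_s=e_i+se_j$, so ``$P(s)\ge 0$ for all $s$'' is nothing other than the pointwise positive-semidefiniteness of the matrix $\tilde K(t,x,y)$. The paper's proof goes straight to this formulation: it asserts $\langle\tilde K(t,x,y)\xi,\xi\rangle\ge 0$ for all $\xi\in\R^m$ and a.e.\ $x,y$, and then invokes the elementary linear-algebra fact that any matrix $M$ with $\langle M\xi,\xi\rangle\ge 0$ satisfies $|m_{ij}+m_{ji}|\le 2\sqrt{m_{ii}m_{jj}}$ (apply it to the $2\times 2$ principal block in rows and columns $i,j$). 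Your Chapman--Kolmogorov factorization $\tilde k_{\alpha\beta}(t,x,y)=\langle R_\alpha^x,R_\beta^y\rangle_{L^2}$ is valid but plays no role in this reduction.

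The step you flag as the obstacle is therefore the entire substance of the lemma, and your diagnosis is accurate: from $\tilde S(t)=\tilde S(t/2)^2\ge 0$ one obtains only that the scalar kernel $(x,y)\mapsto\langle\tilde K(t,x,y)\xi,\xi\rangle$ is positive-definite as an integral operator, i.e.\ $\iint\langle\tilde K(t,x,y)\xi,\xi\rangle\,\varphi(x)\varphi(y)\,dx\,dy\ge 0$ for every scalar $\varphi$, and this does not force pointwise nonnegativity off the diagonal $x=y$. The paper justifies the pointwise matrix positivity only with the phrase ``similar to the proof of the positivity of the kernels $\tilde k_{ij}$'' together with the symmetry of $\{\tilde S(t)\}$. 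Thus your proposal and the paper arrive at the same checkpoint, and the missing ingredient in your write-up is precisely an argument upgrading operator positivity of $\tilde S(t)$ to pointwise positive-semidefiniteness of $\tilde K(t,x,y)$ --- not a Cauchy--Schwarz bound, which as you correctly observe is two-sided and cannot deliver the sign.
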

\begin{proof}
Similar to the proof of the positivity of the kernels $\tilde{k}_{ij}$, one can deduce, using the symmetry of $\{\tilde{S}(t)\}_{t\ge 0}$, that
$\tilde{k}_{ij}(t,x,y)=\tilde{k}_{ji}(t,y,x)$ for all $i,j\in \{1,\ldots ,m\}$ and
$$\langle \tilde{K}(t,x,y)\xi,\xi \rangle \ge 0$$
for all $t>0,\,\xi \in \R^m$ and a.e. $x,y\in \R^d$. On the other hand, it can be seen that every positive matrix $M:=(m_{ij})$ in $\R^m$ satisfies
$$|m_{ij}+m_{ji}|\le 2\sqrt{m_{ii}m_{jj}},\quad \forall i,j\in \{1,\ldots ,m\}.$$
Thus, \eqref{dia-nondia} follows.
\end{proof}
Let us denote by $k_v$ the heat kernel of the semigroup generated by the scalar Schr\"odinger operator $L_v:=div(Q\nabla \cdot)-v$.\\
We can now deduce upper bounds for the kernels $\tilde{k}_{ij}$ from the ones of $k_v$.
\begin{thm}
Assume Hypotheses \ref{Hyp. of KLMR} and \ref{Hyp. of OURH} with $\alpha >2$. Then for $\theta >0$ such that $\theta \eta_2<1$ we have
\begin{equation}\label{Ouha-Rha ker-est}
|\tilde{k}_{ij}(t,x,y)+\tilde{k}_{ij}(t,y,x)|\le Ce^{-\mu_0 t}e^{ct^{-b}}\frac{1}{(|x||y|)^\beta}e^{-\frac{\sqrt{\theta}}{\gamma}|x|^\gamma}e^{-\frac{\sqrt{\theta}}{\gamma}|y|^\gamma}
\end{equation}
for $|x|,\,|y|\ge 1,\,t>0$ and $i,j\in\{1,\dots,m\}$. Here $c,\,C>0,\,b>\frac{\alpha+2}{\alpha-2},\,\beta=\frac{\alpha}{4}+\frac{d-1}{2},\,\gamma=1+\frac{\alpha}{2}$ and $\mu_0$ is the first eigenvalue of $L_v$. 
\end{thm}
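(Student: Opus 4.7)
The plan is to combine the comparison inequality \eqref{dia-nondia} with a scalar domination of the diagonal kernel entries $\tilde k_{ii}$ by the kernel $k_v$ of the scalar Schr\"odinger operator $L_v = \mathrm{div}(Q\nabla\cdot)-v$, and then invoke Agmon-type estimates for $k_v$ that are available in the literature (in the references \cite{Meta-Spina,Ouha-Rha,MPR06,KLR2} cited in the introduction). By \eqref{dia-nondia} and $(\tilde k_{ii}\tilde k_{jj})^{1/2}\le \max_i \tilde k_{ii}$, once we show
\begin{equation*}
\tilde k_{ii}(t,x,y)\le k_v(t,x,y)\qquad (i=1,\dots,m,\ t>0,\ \text{a.e. }x,y\in\R^d),
\end{equation*}
the desired bound reduces to a bound on $k_v$ alone.

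The first step is the pointwise domination $\tilde k_{ii}\le k_v$. I would obtain it via the Trotter--Kato formula \eqref{Trotter Kato}. Since $V$ is dissipative (with $\beta<0$), the matrix semigroup $\{S_p(t)\}$ from \cite{KLMR} is dominated by the scalar semigroup $\{T_Q(t)\}$ associated to $\mathrm{div}(Q\nabla\cdot)$: namely $|S_p(t)(\varphi e_i)|\le T_Q(t)|\varphi|$ componentwise, a Kato-type inequality that can be read off from the dissipativity $\Re\langle V\xi,\xi\rangle\le 0$ (this domination is proved in \cite{KLMR} and is at the heart of positivity arguments). Applying this domination inside each factor of the Trotter product $(e^{-tv/n}S_p(t/n))^n$ and using that $e^{-tv/n}$ commutes with taking moduli, I get
\begin{equation*}
|\tilde S(t)(\varphi e_i)|\le \lim_{n\to\infty}\bigl(e^{-tv/n}T_Q(t/n)\bigr)^{n}|\varphi| = T_v(t)|\varphi|,
\end{equation*}
the last equality being the scalar Trotter--Kato (or Feynman--Kac) formula for the operator $\mathrm{div}(Q\nabla\cdot)-v$. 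Testing $\tilde S(t)(\varphi e_i)$ against $e_i$ and varying $\varphi\ge 0$ yields $\tilde k_{ii}(t,x,y)\le k_v(t,x,y)$ for a.e.\ $x,y$.

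The second step is to invoke the scalar Agmon-type kernel bound for $L_v$ with $v(x)=1+|x|^\alpha$, $\alpha>2$. Under the assumption $\theta\eta_2<1$, the results of \cite{Meta-Spina,Ouha-Rha,MPR06} (adapted to the uniformly elliptic $Q$, which only affects the constants via the ellipticity bounds $\eta_1,\eta_2$) give
\begin{equation*}
k_v(t,x,y)\le C\,e^{-\mu_0 t}e^{ct^{-b}}\frac{1}{(|x||y|)^{\beta}}\exp\!\Bigl(-\tfrac{\sqrt{\theta}}{\gamma}|x|^\gamma-\tfrac{\sqrt{\theta}}{\gamma}|y|^\gamma\Bigr)
\end{equation*}
for $|x|,|y|\ge 1$, with $\gamma=1+\alpha/2$, $\beta=\alpha/4+(d-1)/2$, $b>(\alpha+2)/(\alpha-2)$ and $\mu_0$ the bottom of the spectrum of $L_v$. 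The exponent $\gamma$ comes from the Agmon distance $\int_0^{|x|}\sqrt{v(s)}\,ds\sim |x|^\gamma/\gamma$, the polynomial prefactor $(|x||y|)^{-\beta}$ from the WKB correction associated to $|x|^\alpha$, and the factor $e^{ct^{-b}}$ from the short-time singularity.

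Combining the domination of Step 1 with the bound of Step 2, and inserting into the right-hand side of \eqref{dia-nondia}, yields \eqref{Ouha-Rha ker-est}. The main obstacle in this plan is the rigorous justification of Step 1: one needs to pass to the limit in the Trotter--Kato formula while preserving the pointwise domination through the scalar bounded multiplication operator $e^{-tv/n}$ and the (possibly only strongly continuous) matrix semigroup $S_p(t/n)$. Once the Kato inequality $|S_p(t)u|\le T_Q(t)|u|$ from \cite{KLMR} is in hand, this passage is standard, and the rest of the argument is merely the quotation of the known scalar kernel bound.
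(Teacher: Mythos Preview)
Your argument follows the same three-step skeleton as the paper: invoke the comparison \eqref{dia-nondia}, dominate the diagonal entries $\tilde k_{ii}$ by a scalar Schr\"odinger kernel associated with the potential $v$, and then quote the scalar estimate \cite[Theorem~2.7]{Ouha-Rha}. The only difference is in how the middle step is justified. The paper simply asserts that $\tilde k_{ii}$ \emph{is} the heat kernel of the scalar operator $\mathrm{div}(Q\nabla\cdot)-(v-v_{ii})$ and then uses $v-v_{ii}\ge v$ (from \eqref{diss of V} with $\xi=e_i$) to feed directly into the Ouhabaz--Rhandi bound; this identification is literally correct only when the off-diagonal entries of $V$ vanish. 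Your route---the Kato-type domination $|\tilde S(t)f|\le T_v(t)|f|$, obtained from the dissipativity of $V$ combined with the Trotter--Kato formula \eqref{Trotter Kato}---is a more careful way to reach the inequality $\tilde k_{ii}\le k_v$, and it remains valid for arbitrary (not just diagonal) symmetric $V$. Either way, once the diagonal entries are controlled by $k_v$, nothing beyond \eqref{dia-nondia} and the cited scalar estimate is needed.
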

\begin{proof}
One has to note first that $\tilde{k}_{ii}$ is the heat kernel of the semigroup generated by the scalar operator $\tilde{L}_v:= div(Q\nabla \cdot)-(v-v_{ii})$ and, by \eqref{diss of V} , the potential $v-v_{ii}\ge v$. So, the assertion follows from \eqref{dia-nondia} and \cite[Theorem 2.7]{Ouha-Rha}.
\end{proof}

A lower kernel estimate holds for entries of the matrix kernel $\tilde{K}(\cdot,\cdot,\cdot)$ in the case where the semigroup $\{\tilde{S}(t)\}_{t\ge0}$ is positive.
\begin{prop}\label{lower-dom}
  Assume Hypotheses \ref{Hyp. of KLMR} and \ref{Hyp. of OURH}. If $0\le-v_{ii}\le v$ and $v_{hl}\ge 0$, for all $h\neq l\in\{1,\dots,m\}$, then
 \begin{equation}\label{eq. S(t)> S_2v(t)}
 0\le S_{2v}(t)f\le \langle\tilde{S}(t)(fe_i),e_j\rangle
 \end{equation}
 for every $t>0$, $0\le f\in C_c^\infty(\R^d)$ and $i,j\in\{1,\dots,m\}$. Here $\{S_{2v}(t)\}_{t\ge0}$ is the $C_0$-semigroup generated by the scalar Schr\"odinger operator $L_{2v}:=div(Q\nabla\cdot)-2v$ in $L^2(\R^d)$. In particular,
 \begin{equation}\label{eq. k_ij > k_2v}
  0\le k_{2v}(t,x,y)\le  \tilde{k}_{ij}(t,x,y)
  \end{equation}
  for every $i,j\in\{1,\dots,m\}$, $t>0$ and $x,y\in\R^d$.
 \end{prop}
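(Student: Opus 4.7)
The plan is to exploit the sign conditions on $V$ and $v$ to show that each component of $u(t):=\tilde{S}(t)(fe_i)$ is a distributional super-solution of the scalar Schr\"odinger equation driven by $L_{2v}$, then conclude by the parabolic maximum principle. Fix $0\le f\in C_c^\infty(\R^d)$ and set $u_k(t):=\langle u(t),e_k\rangle$. The hypothesis $v_{hl}\ge 0$ for $h\ne l$ together with Proposition \ref{prop. positivity+consistence}(2) yields positivity of $\{\tilde{S}(t)\}_{t\ge 0}$, whence $u_k(t)\ge 0$ for every $k$ and $t\ge 0$.

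From $\partial_t u=\mathrm{div}(Q\nabla u)+\tilde{V}u$ with $\tilde{V}=V-vI_m$, the $j$-th component satisfies
\begin{equation*}
\partial_t u_j = \mathrm{div}(Q\nabla u_j) + (v_{jj}-v)u_j + \sum_{k\ne j} v_{jk}u_k.
\end{equation*}
Using $-v_{jj}\le v$ (hence $v_{jj}-v\ge -2v$) and the sign conditions $v_{jk}\ge 0$, $u_k\ge 0$, the cross-terms are nonnegative and one obtains the distributional super-solution inequality $(\partial_t-L_{2v})u_j\ge 0$. Specialising to the case where the initial datum matches, i.e.\ $j=i$ so that $u_i(0)=f=w(0)$ with $w(t):=S_{2v}(t)f$, the difference $\phi:=u_i-w$ is a super-solution of $(\partial_t-L_{2v})\phi\ge 0$ with $\phi(0)=0$. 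The scalar parabolic maximum principle for $L_{2v}$ then gives $\phi\ge 0$, i.e.\ $S_{2v}(t)f\le \langle\tilde{S}(t)(fe_i),e_i\rangle$; combined with the positivity of $\{S_{2v}(t)\}_{t\ge 0}$ (standard, since $2v\ge 0$), this is \eqref{eq. S(t)> S_2v(t)}.

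Passing to the kernel inequality \eqref{eq. k_ij > k_2v} is then standard: apply the functional inequality to a Dirac-approximating sequence $f_n\to \delta_y$, invoke the kernel representations of Corollary \ref{exist of kernel} and its scalar analog for $\{S_{2v}(t)\}$, and use local H\"older regularity of parabolic kernels in the spatial variables to extract the pointwise bound $k_{2v}(t,x,y)\le \tilde{k}_{ii}(t,x,y)$ for a.e.\ $x,y$. The main technical hurdle is justifying the comparison principle in our unbounded-coefficient setting; I would handle this either by truncating $v_n:=\min(v,n)$, applying the classical bounded-coefficient comparison for the resulting semigroups, and passing to the limit via monotone convergence of the semigroups together with the generation theory of Corollary \ref{coro gen. of s.g.}, or directly by an $L^2$-energy argument pairing the super-solution inequality against $(-\phi)_+$ and exploiting the ellipticity of $Q$ and nonnegativity of $v$ to close a Gronwall-type estimate forcing $(-\phi)_+\equiv 0$.
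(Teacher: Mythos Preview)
Your argument is correct for the diagonal case $i=j$, and it takes a genuinely different route from the paper's. Note, though, that you explicitly specialise to $j=i$ and then claim the full statement \eqref{eq. S(t)> S_2v(t)} and the kernel bound only for $\tilde{k}_{ii}$; in fact the paper's own proof also yields only the diagonal inequality, and the off-diagonal bound $S_{2v}(t)f\le\langle\tilde{S}(t)(fe_i),e_j\rangle$ for $i\ne j$ is false in general (take $v_{hl}\equiv 0$ for $h\ne l$: the system decouples, so $\tilde{k}_{ij}\equiv 0$ for $i\ne j$ while $k_{2v}>0$). So your restriction is not a defect relative to the paper.

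On the method: you show that each component $u_j$ of $\tilde{S}(t)(fe_i)$ is a distributional super-solution of $\partial_t-L_{2v}$ and then invoke a scalar parabolic comparison principle for the unbounded-potential operator $L_{2v}$, a step you correctly flag as requiring separate justification (truncation, or an energy pairing against $(-\phi)_+$). The paper instead runs a semigroup interpolation: setting $\psi(s):=\langle\tilde{S}(t-s)(S_{2v}(s)f\cdot e_i),e_j\rangle$ for $s\in[0,t]$, it differentiates in $s$, checks the elementary componentwise sign computation $(L_{2v}\varphi)e_i-\tilde{L}(\varphi e_i)\le 0$, and concludes $\psi'(s)\le 0$ using only the already-established positivity of the \emph{vector} semigroup $\{\tilde{S}(t)\}$ from Proposition~\ref{prop. positivity+consistence}(2); then $\psi(t)\le\psi(0)$ is exactly the desired inequality. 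Your approach makes the super-solution mechanism transparent but outsources the hard step to an external scalar maximum principle; the paper's trick is more self-contained, recycling the vector-semigroup positivity to absorb the comparison and avoiding any separate maximum-principle argument altogether.
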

\begin{proof}
Fix $0\le f\in C_c^\infty(\R^d)$, $i,j\in\{1,\dots,m\}$ and $t>0$. Consider $\varphi(s,\cdot)=S_{2v}(s)f$ and $\psi(s,\cdot)=\langle\tilde{S}(t-s)(\varphi(s,\cdot)e_i),e_j\rangle$ for $s\in[0,t]$. Since $f\in C_c^\infty(\R^d)\subset D(L_{2v})$, it follows that $\varphi(s,\cdot)=S_{2v}(s)f\in D(L_{2v})=W^{2,2}(\R^d)\cap D(2v)$ for all $s\in[0,t]$. Then, $\varphi(s,\cdot)e_i\in W^{2,2}(\R^d,\R^m)$. Let us now show that $\varphi(s,\cdot)e_i\in D(\tilde{V})$. This follows from the fact that $v_{ki}\le\sqrt{-v_{kk}}\sqrt{-v_{ii}}\le v,\,i\neq k$, which implies that $v_{ki}\varphi(s,\cdot)\in L^2(\R^d)$, for every $k\in\{1,\dots,m\}$. Hence $\varphi(s,\cdot)e_i\in D(\tilde{L})=W^{2,2}(\R^d,\R^m)\cap D(\tilde{V})$. 
Differentiating the function $\psi$ with respect to $s\in [0,t]$ we have
 \begin{align*}
 \psi'(s,\cdot)&=\langle -\tilde{S}(t-s)\tilde{L}(\varphi(s,\cdot)e_i)+\tilde{S}(t-s)(\varphi'(s,\cdot)e_i),e_j\rangle\\
 &= \langle\tilde{S}(t-s)\left(\varphi'(s,\cdot)e_i-\tilde{L}(\varphi(s,\cdot) e_i)\right),e_j\rangle.
 \end{align*}
 On the other hand,
 \begin{align*}
 \varphi'(s,\cdot)e_i-\tilde{L}(\varphi(s,\cdot) e_i) &= \left(div(Q\nabla\varphi(s,\cdot))-2v\varphi(s,\cdot)\right)e_i\\ &-\left(div(Q\nabla\varphi(s,\cdot))e_i-v\varphi(s,\cdot)e_i+\varphi(s,\cdot)\sum_{l=1}^{m}v_{il}e_l\right)\\
 &=-(v+v_{ii})\varphi(s,\cdot) e_i-\varphi(s,\cdot)\sum_{l\neq k}v_{il}e_l\\
 &\le 0.
 \end{align*}
 Since, by assumptions, the semigroup $\{\tilde{S}(t)\}_{t\ge0}$ is positive, it follows that $\psi '(s,\cdot)\le 0$ for all $s\in [0,t]$. Hence
 $\psi(t,\cdot)\le\psi(0,\cdot)$ and thus \eqref{eq. S(t)> S_2v(t)} follows\\
 Now, \eqref{eq. k_ij > k_2v} follows by taking into account that $\tilde{k}_{ij}(t,\cdot,\cdot)$ is the kernel associated to $\langle\tilde{S}(t)(\cdot e_i),e_j\rangle$, for every $t>0$.
 \end{proof}
 As a consequence we obtain by applying \cite[Theorem 6.3 and Theorem 3.2]{davies-simon84} the following lower estimates.
 \begin{coro}
 Assume the same assumptions as in Proposition \ref{lower-dom}. Moroever, assume that $\alpha>2$ and $Q=I_d$. Then we have
 $$\tilde{k}_{ij}(t,x,y)\ge c_t\frac{1}{(|x||y|)^\beta}e^{-\frac{|x|^\gamma}{\gamma}}e^{-\frac{|y|^\gamma}{\gamma}}$$
 for $|x|,\,|y|\ge 1,\,t>0$ and $i,j\in\{1,\dots,m\}$, where $c_t$ is a positive constant, $b>\frac{\alpha+2}{\alpha-2},\,\beta=\frac{\alpha}{4}+\frac{d-1}{2}$ and $\gamma=1+\frac{\alpha}{2}$.
 \end{coro}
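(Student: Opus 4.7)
The plan is to chain the kernel comparison in Proposition \ref{lower-dom} with the Davies--Simon lower bound for scalar Schrödinger heat kernels, and then to insert the known WKB-type asymptotics of the ground state of the scalar operator $\Delta - 2(1+|x|^\alpha)$.

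First, since $Q=I_d$, the scalar operator $L_{2v}=\Delta-2v$ with $v(x)=1+|x|^\alpha$ satisfies the standing assumptions of \cite{davies-simon84}: the potential is locally bounded, bounded below, and tends to infinity at infinity, so $L_{2v}$ has compact resolvent on $L^2(\R^d)$ and admits a strictly positive ground state $\phi_0\in L^2(\R^d)$ associated with the smallest eigenvalue $\lambda_0$. By \cite[Theorem 3.2]{davies-simon84} (the intrinsic ultracontractivity/lower bound statement), there exists, for each $t>0$, a constant $c_t>0$ such that
\begin{equation*}
k_{2v}(t,x,y)\ge c_t\,\phi_0(x)\,\phi_0(y),\qquad x,y\in\R^d.
\end{equation*}

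Second, since $\alpha>2$, the potential $2v$ grows like a power $|x|^\alpha$ and falls into the WKB regime covered by \cite[Theorem 6.3]{davies-simon84}. That theorem gives the precise pointwise asymptotics
\begin{equation*}
\phi_0(x)\ge c\,|x|^{-\beta}\,e^{-|x|^\gamma/\gamma},\qquad |x|\ge 1,
\end{equation*}
with $\beta=\alpha/4+(d-1)/2$ and $\gamma=1+\alpha/2$; the exponent $\gamma$ is the classical Agmon/WKB exponent for a potential $\sim |x|^\alpha$, and the polynomial prefactor $|x|^{-\beta}$ comes from the transport equation correction. Combining the last two displayed estimates yields
\begin{equation*}
k_{2v}(t,x,y)\ge c_t\,\frac{1}{(|x||y|)^\beta}\,e^{-|x|^\gamma/\gamma}\,e^{-|y|^\gamma/\gamma}
\end{equation*}
for $|x|,|y|\ge 1$ and $t>0$, after absorbing the constant from the WKB step into $c_t$.

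Finally, Proposition \ref{lower-dom} applies because all its hypotheses are in force: $V$ is symmetric with $0\le -v_{ii}\le v$ and nonnegative off-diagonal entries (so that $\{\tilde S(t)\}$ is positive by Proposition \ref{prop. positivity+consistence}), and $Q=I_d$ fits into the general framework. Hence $\tilde k_{ij}(t,x,y)\ge k_{2v}(t,x,y)$ for every $i,j$ and every $x,y$, and the corollary follows by inserting the lower bound just obtained.

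The main obstacle is to verify that the two Davies--Simon results actually apply to the potential $2v=2+2|x|^\alpha$ with the stated parameters, in particular checking the hypothesis of \cite[Theorem 6.3]{davies-simon84} (smoothness of $v$ outside a ball and control of derivatives) and identifying $\beta$ and $\gamma$ with those given in the statement; the step from $\tilde k_{ij}\ge k_{2v}$ to the pointwise estimate is otherwise a direct substitution.
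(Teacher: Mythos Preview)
Your proof is correct and follows exactly the route the paper intends: combine the comparison $\tilde k_{ij}\ge k_{2v}$ from Proposition~\ref{lower-dom} with \cite[Theorem 3.2]{davies-simon84} (the lower bound $k_{2v}(t,x,y)\ge c_t\phi_0(x)\phi_0(y)$ via intrinsic ultracontractivity) and \cite[Theorem 6.3]{davies-simon84} (the WKB asymptotics of the ground state $\phi_0$). The paper gives no further argument beyond citing these two Davies--Simon results, so your write-up is simply a fleshed-out version of the same proof.
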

 
\section{Asymptotic distribution of the eigenvalues of $-\tilde{L}$}
In this section we assume that Hypotheses \ref{Hyp. of KLMR} and \ref{Hyp. of OURH} are satisfied and consider matrix potentials $V$ with polynomial entries, or more precisely positive powers of $|x|$. On the other hand, the assumption \eqref{Cond. on V} applied for potentials of the form $|x|^\beta V_0$, with $V_0$ a suitable constant matrix, implies $\beta<2$. For this reason, we will assume that
\begin{equation}\label{eq-lei}
v_{ii}=o(|x|^\alpha), \hbox{\ as }|x|\to \infty ,\quad \forall i\in\{1,\dots,m\}.
\end{equation}
Since $v(x)=1+|x|^\alpha$, it follows from Proposition \ref{Prop Compactness} that
$\sigma(-\tilde{L})=\{\lambda_n: n\in\N\}$ consists of eigenvalues only, and the set of corresponding eigenvectors $\{\Psi_n: n\in\N\}$ forms an orthonormal basis of $L^2(\R^d,\R^m)$. In the following proposition we compute the trace of $\{\tilde{S}(t)\}_{t\ge 0}$.
\begin{prop}
For all $i,j\in\{1,\dots,m\}$, one has
\begin{equation}\label{kernel=sum eigenvectors}
\tilde{k}_{ij}(t,x,y)=\sum_{n\in\N}e^{-\lambda_n t}\Psi_n^{(i)}(x)\Psi_n^{(j)}(y)
\end{equation}
for all $x,y\in\R^d$ and $t>0$. Here $\Psi_n^{(i)}(x)$ is the \emph{i}-th component of the vector $\Psi_n(x)$. In particular,
\begin{equation}\label{trace of semigroup}
\int_{\R^d}\sum_{i=1}^{m}\tilde{k}_{ii}(t,x,x)dx=\sum_{n\in\N}e^{-\lambda_n t},\qquad\forall t>0.
\end{equation}
\end{prop}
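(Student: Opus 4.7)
The plan is to leverage self-adjointness of $\tilde{L}_2$ together with compactness of its resolvent (Proposition~\ref{Prop Compactness}), apply the spectral theorem, and then recognize the resulting Hilbert--Schmidt expansion as the kernel representation from Corollary~\ref{exist of kernel}.

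First I would verify that $-\tilde{L}_2$ is nonnegative self-adjoint on $L^2(\R^d,\R^m)$. Under the symmetry assumptions of Hypotheses~\ref{Hyp. of OURH} and since $v$ is a real scalar multiplication, $\tilde{\mathcal{L}}$ is formally symmetric on the core $C_c^\infty(\R^d,\R^m)$ (by the analogue of Proposition~\ref{Coincidence of domains}), and this symmetry together with the m-accretivity of $-\tilde{L}_2$ from Corollary~\ref{coro gen. of s.g.} upgrades to self-adjointness. Because $v(x)=1+|x|^\alpha\to\infty$, Proposition~\ref{Prop Compactness}(ii) yields compact resolvent; hence the spectral theorem provides a real orthonormal basis $\{\Psi_n\}_{n\in\N}$ of $L^2(\R^d,\R^m)$ consisting of eigenvectors of $-\tilde{L}_2$ with eigenvalues $\lambda_n\to+\infty$, and
$$\tilde{S}(t)f=\sum_{n\in\N}e^{-\lambda_n t}\langle f,\Psi_n\rangle\,\Psi_n,\qquad f\in L^2(\R^d,\R^m),\ t>0,$$
with convergence in $L^2$.

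Next I would identify this expansion with the kernel. Proposition~\ref{Prop Lp-Lq estimate} and self-adjointness (by duality) imply that $\tilde{S}(t/2)$ is bounded from $L^1$ into $L^2$ and from $L^2$ into $L^\infty$, so $\tilde{S}(t)=\tilde{S}(t/2)^2$ is Hilbert--Schmidt on $L^2(\R^d,\R^m)$ with kernel $\tilde{K}(t,\cdot,\cdot)$ from Corollary~\ref{exist of kernel}. Testing the spectral expansion against $fe_j$ for $f\in C_c^\infty(\R^d)$ and pairing with $e_i$ gives
$$\int_{\R^d}\tilde{k}_{ij}(t,x,y)f(y)\,dy=\sum_{n\in\N}e^{-\lambda_n t}\Psi_n^{(i)}(x)\int_{\R^d}\Psi_n^{(j)}(y)f(y)\,dy,$$
so \eqref{kernel=sum eigenvectors} holds in $L^2(\R^d\times\R^d)$. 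Each eigenvector $\Psi_n=e^{\lambda_n t/2}\tilde{S}(t/2)\Psi_n$ is continuous thanks to the Gaussian bound \eqref{Gauss. ker. estimate}, and the Cauchy--Schwarz estimate
$$\Big|\sum_{n\le N}e^{-\lambda_n t}\Psi_n^{(i)}(x)\Psi_n^{(j)}(y)\Big|\le \bigl(\tilde{k}_{ii}(t,x,x)\bigr)^{1/2}\bigl(\tilde{k}_{jj}(t,y,y)\bigr)^{1/2},$$
together with a Dini-type argument on the diagonal (where each partial sum is monotone in $N$), promotes the equality \eqref{kernel=sum eigenvectors} to pointwise on $\R^d\times\R^d$.

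The main obstacle is precisely this upgrade from $L^2$ to pointwise convergence on the diagonal, which is what legitimizes the interchange of sum and integral in the trace formula. Once \eqref{kernel=sum eigenvectors} is secured pointwise, setting $y=x$, summing over $i$ and integrating give, by monotone convergence and orthonormality of $\{\Psi_n\}$,
$$\int_{\R^d}\sum_{i=1}^{m}\tilde{k}_{ii}(t,x,x)\,dx=\sum_{n\in\N}e^{-\lambda_n t}\sum_{i=1}^{m}\int_{\R^d}\bigl(\Psi_n^{(i)}(x)\bigr)^2\,dx=\sum_{n\in\N}e^{-\lambda_n t}\|\Psi_n\|_{L^2}^2=\sum_{n\in\N}e^{-\lambda_n t},$$
which establishes \eqref{trace of semigroup}.
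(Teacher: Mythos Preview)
Your proposal is correct and follows essentially the same route as the paper: expand in the eigenbasis, apply the semigroup, test against $fe_j$, pair with $e_i$ to identify the kernel, then sum the diagonal and integrate using orthonormality. The only difference is that you supply additional justification (self-adjointness, Hilbert--Schmidt property, the Dini/Cauchy--Schwarz upgrade to pointwise convergence, monotone convergence for the trace) where the paper's proof proceeds more briskly and leaves these convergence issues implicit.
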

\begin{proof}
Let $f\in L^2(\R^d,\R^m)$. So, $f=\displaystyle\sum_{n\in\N}\langle f,\Psi_n\rangle_{L^2}\Psi_n$. Then,
\begin{align*}
\tilde{S}(t)f=\sum_{n\in\N}\langle f,\Psi_n\rangle_{L^2}\tilde{S}(t)\Psi_n=\sum_{n\in\N}\langle f,\Psi_n\rangle_{L^2} e^{-\lambda_n t}\Psi_n.
\end{align*}
for every $t>0$. Hence,
\begin{align*}
\langle\tilde{S}(t)f(x),e_i\rangle&= \sum_{n\in\N}e^{-\lambda_n t}\int_{\R^d}\sum_{j=1}^{m}f_j(y)\Psi_n^{(j)}(y) \Psi_n^{(i)}(x)\, dy
\end{align*}
for each $i\in\{1,\dots,m\}$. Therefore, for every $\varphi\in C_c^\infty(\R^d)$,
\begin{align*}
\int_{\R^d}\tilde{k}_{ij}(t,x,y)\varphi(y)dy&=\langle\tilde{S}(t)(\varphi e_j)(x),e_i\rangle\\
&=\int_{\R^d}\sum_{n\in\N}e^{-\lambda_n t}\Psi_n^{(j)}(y)\Psi_n^{(i)}(x)\varphi(y)\, dy,
\end{align*}
for all $t>0$, $x\in\R^d$ and $i,j\in\{1,\dots,m\}$. From which we deduce \eqref{kernel=sum eigenvectors}. Moreover,
\begin{align*}
\sum_{i=1}^{m}\int_{\R^d}\tilde{k}_{ii}(t,x,x)\,dx &=\sum_{i=1}^m\int_{\R^d}\sum_{n\in\N}e^{-\lambda_n t}\Psi_n^{(i)}(x)^2\, dx\\
&=\sum_{n\in\N}e^{-\lambda_n t}\int_{\R^d}|\Psi(x)|^2\, dx\\
&=\sum_{n\in\N}e^{-\lambda_n t}.
\end{align*}
\end{proof}
Let us now introduce the measure $\mu$ defined over $\R^+$ by $\mu(X)=|\{n: \lambda_n\in X\}|$. Define, for $\lambda>0$, $\mathcal{N}(\lambda)=\mu[0,\lambda]$ the number of $\lambda_n$ which are less or equal than $\lambda$. Let us denote by $\hat{\mu}$ the Laplace transform  of $\mu$, $\hat{\mu}(t):=\int_\R e^{-tx}d\mu(x)$, for all $t>0$. According to \eqref{trace of semigroup}, one has
\[\hat{\mu}(t)=\sum_{n\in\N}e^{-\lambda_n t}=\int_{\R^d}\sum_{i=1}^{m}\tilde{k}_{ii}(t,x,x)dx .\]
We are looking for the asymptotic behaviour of $\mathcal{N}(\lambda)$ when $\lambda\to\infty$. This is related to the behaviour near $0$ of $\hat{\mu}$ by the famous Tauberian theorem due to Karamata, cf. \cite[Theorem 10.3]{Simon}, \cite[Theorem 7.1]{Meta-Spina}. For the proof of the following theorem we use the same approach as in \cite[Section 4]{Meta-Spina}.
\begin{thm}
Assume that $Q=I_d$, \eqref{eq-lei}, Hypotheses \ref{Hyp. of KLMR} and \ref{Hyp. of OURH} are satisfied. Then,
\begin{equation}\label{asymtotic distribution of eigenvalues}
\lim_{\lambda\to\infty}\dfrac{\mathcal{N}(\lambda)}{\lambda^{d(\frac{1}{2}+\frac{1}{\alpha})}}=
\frac{1}{\alpha}\frac{m\,d\,\omega_d}{(4\pi)^\frac{d}{2}}\dfrac{\Gamma(d/\alpha)}{\Gamma(d(\frac{1}{2}+\frac{1}{\alpha})+1)},
\end{equation}
where $\omega_d$ denotes the volume of the unit sphere of $\R^d$.
\end{thm}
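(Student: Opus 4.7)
The plan is to apply Karamata's Tauberian theorem \cite[Theorem 10.3]{Simon} to the measure $\mu$. By \eqref{trace of semigroup}, this reduces the problem to establishing the short-time equivalence
\[
\hat{\mu}(t) \;\sim\; \frac{m\,d\,\omega_d\,\Gamma(d/\alpha)}{\alpha\,(4\pi)^{d/2}}\,t^{-d(\frac{1}{2}+\frac{1}{\alpha})} \qquad \text{as } t\to 0^+.
\]
Once this is in hand, Karamata, applied with index $\rho = d(\tfrac{1}{2}+\tfrac{1}{\alpha})$, yields \eqref{asymtotic distribution of eigenvalues} immediately upon dividing by $\Gamma(\rho+1)$.

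I would obtain this equivalence in two stages, following the scalar scheme of \cite[Section 4]{Meta-Spina}. The first stage is the matrix semiclassical expansion
\[
\sum_{i=1}^m \tilde{k}_{ii}(t,x,x) \;\sim\; \frac{1}{(4\pi t)^{d/2}}\,\mathrm{tr}\bigl(e^{t\tilde{V}(x)}\bigr), \qquad t\to 0^+,
\]
which one justifies by writing $\tilde{S}(t)$ via the Trotter product formula $\lim_n(e^{t\Delta/n}e^{t\tilde{V}/n})^n$ and using the Gaussian upper bound of Theorem \ref{Thm Gauss ker est} to control the error terms uniformly in $x$. The second stage is the direct evaluation of $\int_{\R^d}\mathrm{tr}(e^{t\tilde{V}(x)})\,dx$. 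Diagonalising the symmetric matrix $\tilde{V}(x) = V(x) - (1+|x|^\alpha)I_m$, whose eigenvalues are $\nu_k(x) - 1 - |x|^\alpha$ with $\nu_k(x)\le 0$ (by \eqref{diss of V}) and $|\nu_k(x)| = o(|x|^\alpha)$ (a consequence of \eqref{eq-lei} together with the polynomial-entry assumption, which forces all entries of $V$, and hence all its eigenvalues, to grow strictly slower than $|x|^\alpha$), one finds $\mathrm{tr}(e^{t\tilde{V}(x)}) \sim m\,e^{-t|x|^\alpha}$ as $|x|\to\infty$. A polar-coordinates computation then gives
\[
\int_{\R^d} e^{-t|x|^\alpha}\,dx \;=\; \frac{d\,\omega_d}{\alpha}\,\Gamma(d/\alpha)\,t^{-d/\alpha},
\]
and combining the two stages with the rescaling $y = t^{1/\alpha}x$ and dominated convergence produces the claimed short-time asymptotic for $\hat{\mu}(t)$.

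The main obstacle is the first stage: the semiclassical expansion must hold uniformly enough in $x$ for the integration in $x$ to commute with the short-time limit. The delicate regime is $|x|^\alpha t \asymp 1$, where neither a bounded-potential nor a purely confining-potential heuristic suffices on its own. The rescaling $y = t^{1/\alpha}x$ moves this transition to a fixed scale; in the new coordinates the generator, after multiplication by $t$, converges to $-|y|^\alpha I_m$ plus a vanishing perturbation, and the Gaussian bound of Theorem \ref{Thm Gauss ker est} supplies the uniform integrable majorant required for dominated convergence. The complementary bounded region in $x$ contributes only a term of order $t^{-d/2}$, which is negligible against $t^{-d(1/2+1/\alpha)}$ as $t\to 0^+$.
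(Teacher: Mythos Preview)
Your overall plan (Karamata applied to the trace identity \eqref{trace of semigroup}) matches the paper's, but your route to the short-time trace asymptotic is substantially more involved than what the paper actually does. The paper's proof is three lines: it recalls that each diagonal entry $\tilde k_{ii}$ is the heat kernel of the \emph{scalar} Schr\"odinger operator $\Delta-(v-v_{ii})$, observes via \eqref{eq-lei} that $v-v_{ii}=|x|^\alpha+o(|x|^\alpha)$, and then invokes \cite[Proposition~4.4]{Meta-Spina} for each $i$ separately; the factor $m$ in the constant arises simply from summing $m$ scalar contributions with the same leading asymptotic. No matrix semiclassical expansion, no Trotter-product error control, and no rescaling argument is carried out, because all of that analysis is already encapsulated in the cited scalar result. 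Your approach---working directly with $\mathrm{tr}(e^{t\tilde V(x)})$ and redeveloping the on-diagonal expansion at the matrix level---is correct in outline and has the merit of being self-contained (in particular it does not lean on the identification of $\tilde k_{ii}$ with a purely scalar kernel, which is not entirely innocent when $V$ has off-diagonal entries). The cost is that the ``first stage'' you flag as the main obstacle is essentially a reproof of the core of \cite{Meta-Spina} in the vector-valued setting. The paper's shortcut is to decouple the trace $\sum_i \tilde k_{ii}$ into $m$ scalar problems at the outset and quote the scalar theorem.
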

\begin{proof}
We recall that $\tilde{k}_{ii}$ is the heat kernel of the semigroup generated by the scalar operator $\tilde{L}_v=\Delta -(v-v_{ii})$. By \eqref{eq-lei} we know that $(v-v_{ii})(x)=|x|^\alpha +o(|x|^\alpha)$. So, the assertion follows from \cite[Proposition 4.4]{Meta-Spina}.
\end{proof}

\end{document}